\theoremstyle{plain}
\newtheorem{theorem}{Theorem}
\newtheorem{proposition}[theorem]{Proposition}
\newtheorem{corollary}[theorem]{Corollary}
\newtheorem{remark}[theorem]{Remark}
\title{Minimal energy point systems on the unit circle and the real line}
\author{Marcell Gaál, Béla Nagy, Zsuzsanna Nagy-Csiha and Szilárd Révész}
\date{}
\newcommand*{\bD}{\mathbb{D}}
\newcommand*{\bC}{\mathbb{C}}
\newcommand*{\bR}{\mathbb{R}}
\DeclareMathOperator{\supp}{supp}
\newcommand*{\cpw}{K}
\begin{document}

\maketitle

\begin{abstract}

In this paper,
we investigate discrete logarithmic energy problems in the unit circle.
We study the equilibrium configuration of $n$ electrons and $n-1$ pairs of external protons of charge $+1/2$.
It is shown that all the critical points of the discrete logarithmic energy are global minima, and they are the solutions of certain equations involving Blaschke products.
As a nontrivial application, we refine a recent result of Simanek, namely, we prove that any configuration of $n$ electrons in the unit circle is in stable equilibrium (that is, they are not just critical points but are of minimal energy) with respect to an external field generated by $n-1$ pairs of protons.

MSC 2010 Classification codes: 31C20, 30J10, 78A30

Keywords: Blaschke product, electrostatic equilibrium, potential theory, external fields

\end{abstract}

\section{Introduction and preliminaries}

The motivation of this work comes from
certain equilibrium questions
which, in turn, have roots
in rational orthogonal systems.
Exploring the connection between critical points of 
orthogonal polynomials
and
equilibrium points
goes back to Stieltjes.
For more on this connection,
see, e.g., 
\cite{Ismail2000a}, \cite{Ismail2000b}
and the references therein.

Rational orthogonal systems are widely used on the area of signal processing, and also on the field of system and control theory.
These systems consist of rational functions with poles located outside the closed unit disk.
A wide class of rational orthogonal systems is the so-called Malmquist-Takenaka system 
from which one can recover the usual trigonometric system, the Laguerre system and the Kautz system as well.
In earlier works, 
in analogy with 
the discrete Fourier transform, a discretized version of the Malmquist-Takenaka system was introduced.

In signal processing and system identification (e.g.~ mechanical
systems related to control theory) the rational orthogonal
bases and Malmquist–Takenaka systems
(e.g.~discrete Laguerre and Kautz systems)
are more efficient than the trigonometric system
in the determination of the transfer functions.
There are lots of results in this field, see e.g.
\cite{FeichtingerPap} and the references therein, or \cite{MQW} and \cite{ModellingBook}.

In connection with potential theory,
it was studied (e.g.~in \cite{PapSchipp2001})
whether the discretization nodes satisfy certain equilibrium conditions, namely,
whether
they arise from critical points of a logarithmic potential  energy.
Such discretizations appear naturally, see e.g.~  
\cite{Bultheelbook} by Bultheel et al or 
\cite{Golinskii} by Golinskii.
The question whether the critical points are minima was proposed by Pap and Schipp \cite{PapSchipp2001, PapSchipp2015}.
In this paper, we follow this line of research.
After this introduction and statements of results, we study on the unit circle
a quite general
logarithmic energy
which is determined by a signed measure, and prove that after
inverse Cayley transform the transformed energy
on the real line differs only in an additive constant.
Next using a recent result of Semmler and Wegert \cite{SemmlerWegert} we give an affirmative answer
to the question posed by Pap and Schipp concerning the critical points.
Finally, as an application,
we present a refinement of a result of Simanek \cite{Simanek}.

First let us start with
 some
notation and essential background material.
We use the standard notations ${\mathbb D}:=\{z\in {\mathbb C} :~ |z|<1\}$, $\partial {\mathbb D}:=\{z\in {\mathbb C} :~ |z|=1\}$, ${\mathbb D}^{*}:=\{z\in {\mathbb C} :~ |z|>1\}$, ${\mathbb T}:={\mathbb R}/2\pi{\mathbb Z}$ and $\zeta^*:=1/\overline{\zeta}$ $(\zeta\ne 0)$.
We  also  use Blaschke products, defined for
$a_1,\ldots,a_n\in \bD$
and $\chi$, $|\chi|=1$
as
\begin{equation} \label{blaschkeprod}
    B(z):=
    \chi\prod_{k=1}^n \frac{z-a_k}{1-\overline{a_k}z}.
\end{equation}
In particular, when the leading coefficient $\chi=1$, $B(z)$ is called \emph{monic} Blaschke product.

We assume $B'(0)\ne 0$. In this case the well-known Walsh' Blaschke theorem (see for instance \cite{SheilSmall}, p. 377) says that $B'(z)=0$
has $2n-2$ (not necessarily different) solutions, where
$n-1$ of them (counted with multiplicites) are in the unit disk,
and if $\zeta\in\bD\setminus \{0\}$ satisfies $B'(\zeta)=0$, then $\zeta^*:=1/\overline{\zeta}$
is also a critical point, $B'( \zeta^*)=0$, with the same multiplicity as $\zeta$. It also follows that then $B'|_{\partial \mathbb{D}} \ne 0$.

Next, we investigate the structure of solutions of the equation
\begin{equation}
B(e^{it})=e^{i \delta},
\label{Blaschkeeq}
\end{equation}
where $B(.)$ is a Blaschke product. 
It is standard to see that $\Im \log B(e^{it})$ 
can be defined continuously and it is strictly increasing
on $[0,2\pi]$ from 
\[ 
\alpha:=\Im \log B(1) =\arg B(1),
\quad \alpha \in[-\pi,\pi)
\]
to $\alpha + 2n\pi$, 
see, e.g.~\cite{SheilSmall}, pp.~373-374.
Therefore \eqref{Blaschkeeq} has $n$ different solutions 
 in $t\in[0,2\pi)$ for any $\delta\in\bR$. Hence it is logical to consider $n$-tuples of different solutions as solution vectors for \eqref{Blaschkeeq}.
 
Now, we
are to reduce different types of symmetries
among the solution vectors
step-by-step. 
For given $\delta\in\bR$, consider
\begin{equation}
 \big\{(\tau_1,\ldots,\tau_n)\in \bR^n:
 B\left(e^{i \tau_j}\right)=e^{i \delta}, \
j=1,\ldots,n 
 \big\}.
 \label{allsol}
\end{equation}
We can restrict our attention to the reduced
set
$\tau_1\le\tau_2\le\ldots\le\tau_n\le \tau_1+2\pi$
without loss of generality, for picking any $\tau_1$ we can normalize $\bmod~ 2\pi$ and then order the remaining $\tau_j$.
Actually, since the $\tau_j$ are different, 
all such solutions of \eqref{Blaschkeeq} belong to the open
set
\begin{equation}
A:=\big\{\left(\tau_1,\tau_2,\ldots,\tau_n\right)\in\bR^n:\
\tau_1<\tau_2<\ldots<\tau_n<\tau_1+2\pi
\big\}.
\label{Asetdef}
\end{equation}
It is a standard step (see  \cite{SheilSmall} loc. cit.)
that one can define the functions
$\delta \mapsto \tau_j(\delta)$
such that they
are continuously differentiable,
strictly increasing,
and
$\tau_1(\delta)<\ldots<\tau_n(\delta)<\tau_1(\delta)+2\pi$
for all $\delta\in\bR$, while 
$B(\exp(i\tau_j(\delta)))=\exp(i\delta)$ $j=1,\ldots,n$.
As $B(e^{i 0})=e^{i\alpha}$,
we have
$0\in \{\tau_1(\alpha),\tau_2(\alpha),\ldots,\tau_n(\alpha)\}$. 
By relabelling again, if necessary, we may assume that
\begin{equation}
\tau_1(\alpha)=0.
\label{tau1alpha}
\end{equation}
Hence $T(\delta):=\big(\tau_1(\delta),\ldots,\tau_n(\delta)\big)$
can be viewed as a smooth arc
lying in
$A\subset \bR^n$.
Moreover, the graph $S_\bR:=\{T(\delta):\  \delta\in\bR\}$ contains
all the solutions of \eqref{Blaschkeeq}
from $A$, that is, if
$\mathbf{t}:=(t_1,\ldots,t_n)\in A$
and $\lambda\in\bR$
are such that
$B(\exp(i t_j))=\exp(i\lambda)$,
$j=1,\ldots,n$
hold,
then there exists
$\delta\in\bR$
such that
$\mathbf{t}=T(\delta)$.
Furthermore,
$\exp\big(i \tau_j(\delta+2n\pi)\big)=\exp\big(i \tau_j(\delta)\big)$ for $j=1,2,\ldots,n$, $\delta\in\bR$.
We introduce the set
\begin{equation} 
S_{0}  := S_{\bR}
\cap [0,2\pi)^n 
=\{T(\delta):\ \delta\in[\alpha,\alpha+2\pi)\}
\label{solcurve}
\end{equation}
where we used \eqref{tau1alpha}.
We call the set
\begin{equation}
S:=
\{T(\delta):\delta\in[\alpha,\alpha+2n\pi)\}
\label{biggersolcurve}
\end{equation}
the \emph{solution curve}.
Note that 
\begin{align*}
S&=S_\bR\cap Q,  \text{ where }
\\
Q&  := [0,2\pi)\times [\tau_2(\alpha),\tau_2(\alpha)+2\pi)\times\ldots\times [\tau_n(\alpha),\tau_n(\alpha)+2\pi)
\end{align*}
where we also used \eqref{tau1alpha},
so $[\tau_1(\alpha),\tau_1(\alpha)+2\pi)=[0,2\pi)$.
Geometrically, $S$ can be obtained from $S_0$
with reflections and translations,
while $S_\bR$ can be obtained from $S$ with
translations only.
Another useful property of $S$ is that
for each $\beta\in[0,2\pi)$ 
there is
exactly one $\delta\in[\alpha,\alpha+2n\pi)$
such that $\tau_{1}(\delta)=\beta$.

\begin{figure}[h!]
\begin{center}
\includegraphics[keepaspectratio,height=7cm]{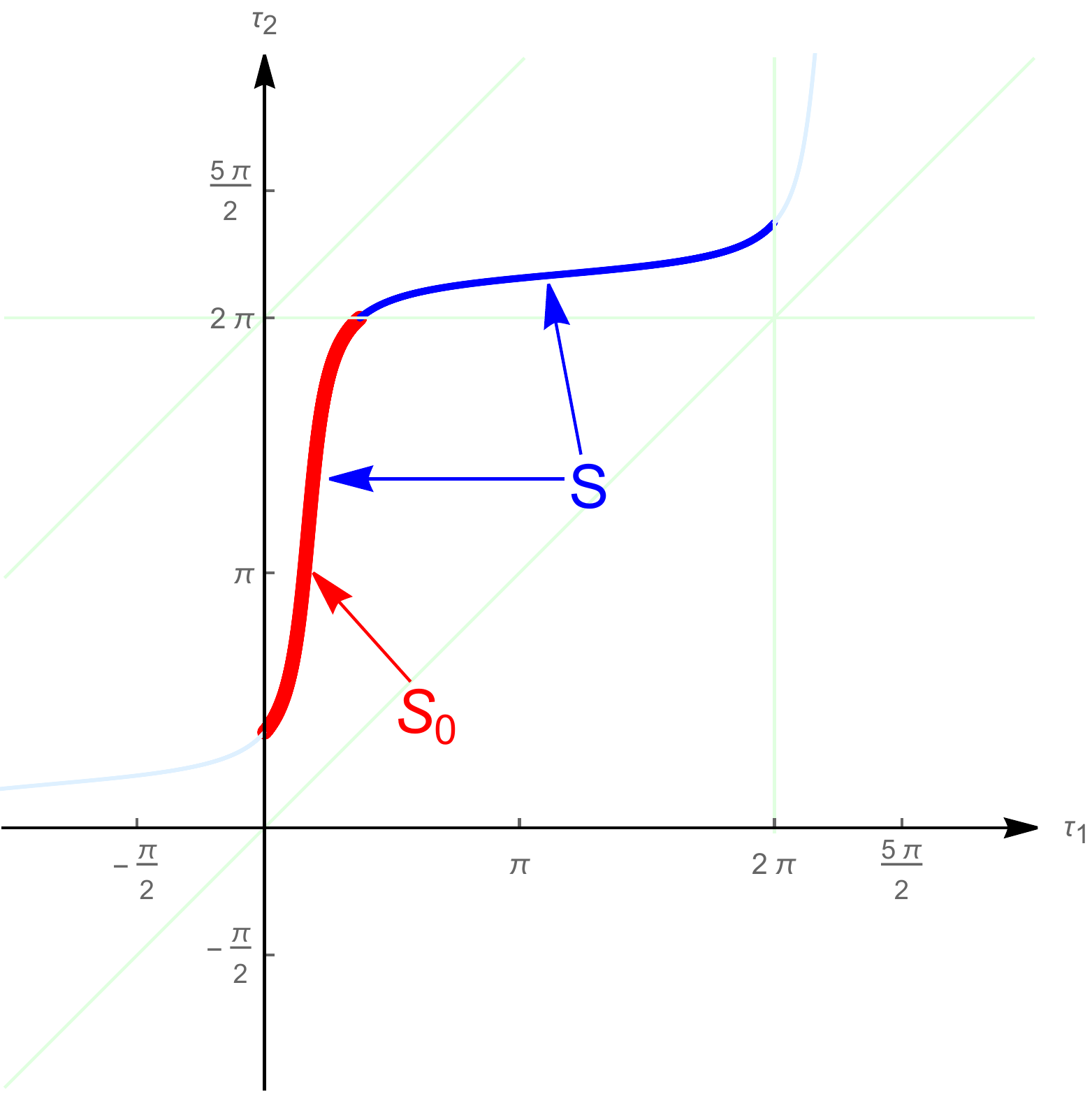}
\quad
\includegraphics[keepaspectratio,height=7cm]{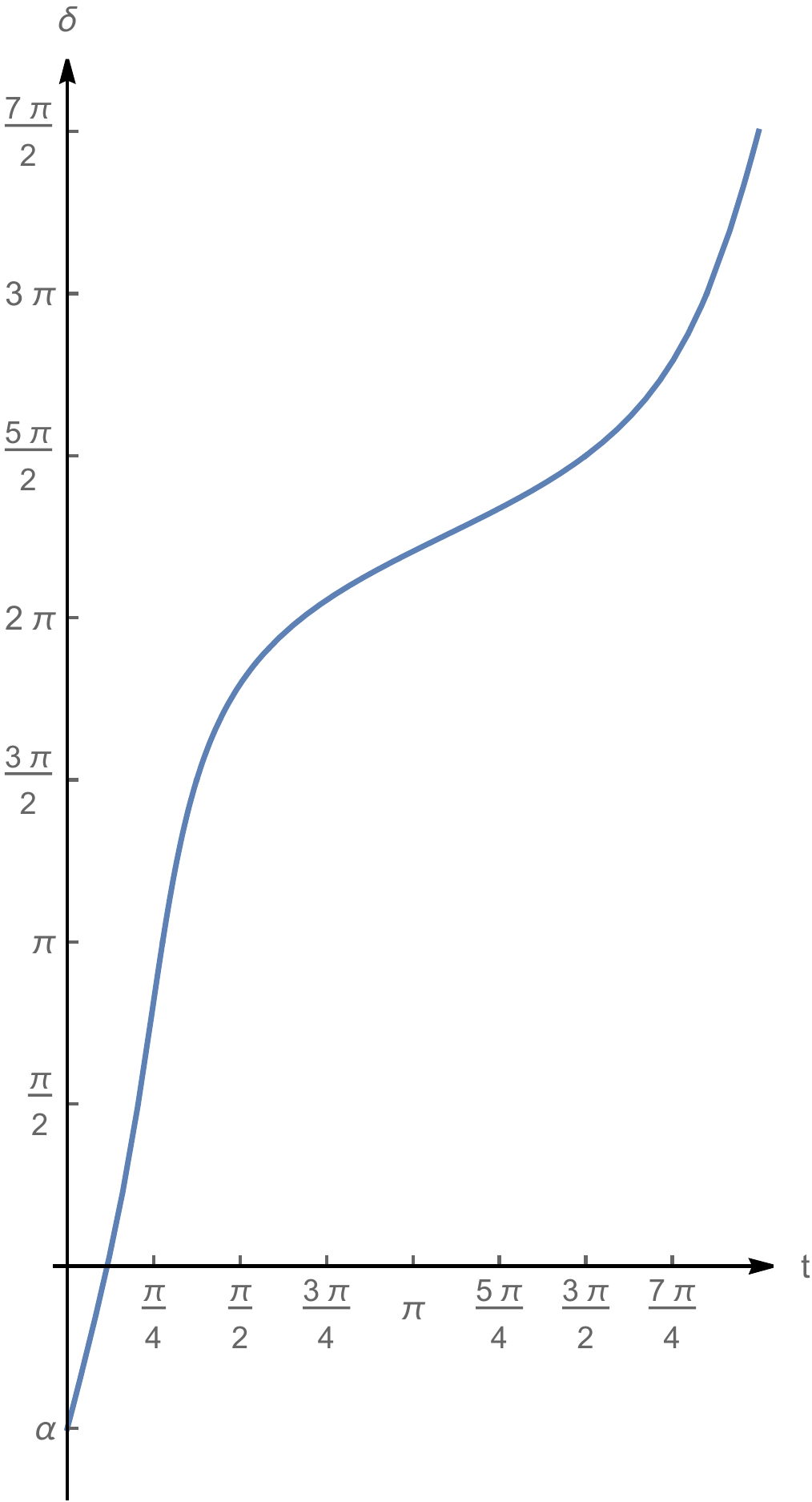}
\end{center}

\caption{Left: solution curve $S$ of
the monic Blaschke product with
zeros at $1/2$ and $(1+i)/2$,
$0\le\tau_1<\tau_2<2\pi$, $B(e^{i\tau_1})=B(e^{i\tau_2})=e^{i\delta}$, $\alpha\le \delta\le \alpha+2\pi$, where $\alpha=-\pi/2$ now.
Right: argument of the same monic Blaschke product, $\delta=\arg B(e^{it})$.}
    \label{fig:solutioncurve}
\end{figure}

These are depicted on the left half of Figure \ref{fig:solutioncurve} where $S_{0}$
is the thick arc 
and it is continued above with 
another arc. 
These two arcs together
form $S$ and describe the motions
of $\tau_1,\tau_2$ together as
$\exp(i\delta)$ goes around the unit circle twice 
($\delta$ grows from $\alpha$ to $\alpha+4\pi$).
Extending these two arcs
with the very thin arcs,
we obtain $S_\bR$, the full solution curve.

Now we recall
the question raised
by Pap and Schipp
in
\cite{PapSchipp2015}.
Consider the pairs of protons, each of charge $+1/2$, at
$\zeta_1,\zeta_1^*,\ldots,\zeta_{n-1},\zeta_{n-1}^*$
as the critical points of a (monic) Blaschke product of degree $n$,
and
the (doubled) discrete energy of electrons restricted to the unit circle
\begin{equation}
W(w_1,\ldots,w_n)
:= \sum_{k=1}^{n-1} \sum_{j=1}^n\log|(w_j-\zeta_k)(w_j-\zeta_k^*)|
-2\sum_{1\leq j<k\leq n}\log|w_j-w_k|
\label{W}
\end{equation}
where $|w_1|=1$, $\ldots$, $|w_n|=1$.
The 
set $S_\bR$ 
connected to the same monic Blaschke product
yields critical
configurations of electrons
for each fixed $\delta$ (which corresponds to fixing one of the electrons),
according to e.g. \cite{PapSchipp2015}.
In other words, for $a_1,\ldots,a_n\in \bD$, using the monic Blaschke product
with zeros at $a_1,\ldots,a_n$
one can construct  pairs of protons as solutions of $B'(z)=0$, and, 
for any given $\delta\in[0,2\pi)$, 
the corresponding  configuration of
electrons as 
all 
solutions of 
$B(z)=e^{i\delta}$.
Then according to the result of Pap and Schipp, Theorem 4 from \cite{PapSchipp2015}, 
these configurations of electrons are critical points of $W$.
The question posed on p. 476 of \cite{PapSchipp2015} is then: 
Are these critical points (local) minima of the
restricted
energy function $\widetilde{W}$
where
$\widetilde{W}(\tau_1,\ldots,\tau_n)
:=
W\left(e^{i\tau_1},\ldots,e^{i\tau_n}\right),
$ 
 $\tau_1\ldots,\tau_n\in \mathbb{R}$?

We give a positive answer to this question in general.
Note that two special cases were solved in \cite{PapSchipp2015}
with
different methods.
Our answer is the following.
There are no other critical points
on the unit circle
(where the
tangential
gradient vanishes).
Moreover, all the points on the
set $S_\bR$ 
are global minimum points of
the restricted energy function $\widetilde{W}$.
 
\begin{theorem}
\label{thm:mainthm1}
Let
$a_1,\ldots,a_n \in
\mathbb{D}$ and
$B(z)$ be the monic Blaschke product 
\eqref{blaschkeprod}
with 
zeros at $a_1,\ldots,a_n$.
Assuming $B'(0)\ne 0$, list up the critical points of $B$ as 
$\zeta_1,\ldots,\zeta_{n-1}\in\bD\setminus \{0\}$ and
$\zeta_1^*,\ldots,\zeta_{n-1}^* \in \bD^*$.

Then
the tangential gradient
of $W$
vanishes on
the points corresponding to the set 
$A\cap Q$
defined in \eqref{Asetdef}
exactly
on the 
set $S$.

More precisely, 
on 
$A\cap Q$,  
it holds that 
$\nabla \,\widetilde{W}(\tau_1,\ldots,\tau_n)=0$
if and only if
$(\tau_1,\ldots,\tau_n)= T(\delta)$
for some 
$\delta\in[\alpha,\alpha+2n\pi)$.

Furthermore,
all points of $S_\bR$ 
are global minimum points of $\widetilde{W}$.

\end{theorem}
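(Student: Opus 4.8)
The plan is to reduce the statement to two ingredients: an explicit formula for $\nabla\widetilde W$, and the fact that $\widetilde W$ is \emph{constant} along the solution curve. First I would compute the gradient. Writing $w_j=e^{i\tau_j}$ and using the boundary formula $\frac{d}{d\tau}\log|f(e^{i\tau})|=\mathrm{Re}\big(ie^{i\tau}f'(e^{i\tau})/f(e^{i\tau})\big)$, together with the elementary identity $\mathrm{Re}\big(ie^{i\tau_m}/(e^{i\tau_m}-e^{i\tau_j})\big)=\tfrac12\cot\frac{\tau_m-\tau_j}{2}$, the electron-electron part produces cotangents. For the external part I would first simplify it: on $\partial\mathbb D$ one has $|e^{i\tau}-\zeta_k^*|=|\zeta_k|^{-1}|e^{i\tau}-\zeta_k|$, so that $\sum_k\log|(e^{i\tau}-\zeta_k)(e^{i\tau}-\zeta_k^*)|=2\sum_k\log|e^{i\tau}-\zeta_k|+\mathrm{const}$. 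This yields the force-balance condition
\[ \frac{\partial \widetilde W}{\partial \tau_m}=2\sum_{k=1}^{n-1}\mathrm{Re}\!\left(\frac{ie^{i\tau_m}}{e^{i\tau_m}-\zeta_k}\right)-\sum_{j\ne m}\cot\frac{\tau_m-\tau_j}{2}. \]

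For the characterization of critical points, the implication that every full preimage configuration $T(\delta)$ is a critical point is exactly Theorem~4 of \cite{PapSchipp2015}, which I would cite for the ``if'' direction. The ``only if'' direction --- that $A\cap Q$ contains no critical points besides those on $S$ --- is the heart of the matter. Here I would rewrite the force-balance system in terms of $B$ itself (the protons being the critical points of $B$, so that $\prod_k(z-\zeta_k)(z-\zeta_k^*)=c\,B'(z)Q(z)^2$ with $Q(z)=\prod_k(1-\overline{a_k}z)$) and invoke the result of Semmler and Wegert \cite{SemmlerWegert} to pin down the balanced configurations: the only way all $n$ tangential forces can vanish simultaneously is that $w_1,\dots,w_n$ are the $n$ solutions of $B(z)=e^{i\delta}$ for a common $\delta$, which by the parametrization set up before the theorem is precisely membership in $S$. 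I expect this uniqueness step to be the main obstacle, since a priori the balance equations form a transcendental system in $n$ variables and one must exclude spurious equilibria; the Semmler-Wegert input is what makes this tractable.

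Next I would show that $\widetilde W$ is constant on $S_{\mathbb R}$. Fix $\delta$ and let $w_1,\dots,w_n$ solve $B(z)=e^{i\delta}$. With $P(z)=\prod_k(z-a_k)$ one has $P(z)-e^{i\delta}Q(z)=\gamma\prod_j(z-w_j)$, where $\gamma=1-e^{i\delta}\prod_k(-\overline{a_k})\ne 0$; differentiating at a root gives $B'(w_m)=\gamma\,\prod_{j\ne m}(w_m-w_j)/Q(w_m)$. Substituting this into the external potential $U(\tau_m)=\log|R(w_m)|=\log|c|+\log|B'(w_m)|+2\log|Q(w_m)|$ and summing over $m$, the term $\sum_m\sum_{j\ne m}\log|w_m-w_j|=2\sum_{j<k}\log|w_j-w_k|$ cancels exactly against the $-2\sum_{j<k}\log|w_j-w_k|$ in $\widetilde W$. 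What remains is $n\log|c|+n\log|\gamma|+\sum_m\log|Q(w_m)|$, and since $Q(1/\overline{a_k})=0$ one finds $\prod_m Q(w_m)=\gamma^{-n}\prod_k \overline{a_k}^{\,n}P(1/\overline{a_k})$, so the $\gamma$-dependence drops out and $\widetilde W(T(\delta))$ equals a constant $m_0$ independent of $\delta$.

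Finally, the global minimality. Since each $\zeta_k\in\mathbb D\setminus\{0\}$ and $\zeta_k^*\in\mathbb D^*$ lie off the unit circle, the external potential is continuous and bounded on $\partial\mathbb D$, whereas $-2\sum_{j<k}\log|w_j-w_k|$ is bounded below and tends to $+\infty$ whenever two points collide. Hence $\widetilde W$ is bounded below and proper on the configuration space of $n$ distinct points of $\partial\mathbb D$, its sublevel sets staying uniformly away from collisions and therefore being compact. Consequently $\widetilde W$ attains a global minimum, necessarily at an interior critical point; using the permutation and $2\pi$-shift symmetries of $\widetilde W$ we may take this minimizer to lie in $A\cap Q$, and the characterization above places it on $S$, where $\widetilde W\equiv m_0$. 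Therefore $m_0$ is the global minimum value, and since $\widetilde W$ takes the value $m_0$ at every point of $S_{\mathbb R}$, all points of $S_{\mathbb R}$ are global minimizers.
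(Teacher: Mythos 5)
Your skeleton is right at the top level (Pap--Schipp's Theorem~4 for the ``if'' direction, Semmler--Wegert for uniqueness, constancy of $\widetilde W$ along the curve, then a global-minimum argument), but the step you yourself identify as ``the heart of the matter'' is left as a black box, and the way you propose to open it would not work as stated. You want to ``invoke the result of Semmler and Wegert'' to conclude that the only configurations where all $n$ tangential forces vanish are the full preimages $B^{-1}(e^{i\delta})$. No uniqueness statement of that kind can apply directly to the full $n$-variable problem: the critical set is the one-parameter curve $S$, not an isolated point, so the equilibrium is never unique until you freeze a degree of freedom. The paper's actual route --- and the reason most of its Sections~2--3 exist --- is to fix one electron $w_1=e^{i\beta}$, apply the inverse Cayley transform $C_\beta$ sending $w_1$ to $\infty$, prove that $W=V+c$ with an explicit finite constant (Propositions~\ref{contenergtrfall}, \ref{energtrfalllim}, \ref{energtrfalllimdiscr}) and that critical points correspond under this change of variables (Proposition~\ref{pr:criticpoints}), and only then apply Semmler--Wegert's Lemma~6 to the resulting problem of $n-1$ \emph{free} electrons on the real line in the field of the $n-1$ proton pairs $\xi_k,\overline{\xi_k}$. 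That lemma gives a unique critical point on each slice $\Delta_\beta$, which must coincide with the point of $S$ on that slice (the latter being critical by Pap--Schipp). Your proposal contains neither the slicing nor the transference of the energy and of criticality, so the ``only if'' direction is genuinely unproved.

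The parts you do carry out are correct and take a different route from the paper, which is worth recording. Your algebraic proof that $\widetilde W(T(\delta))$ is independent of $\delta$ --- via $P-e^{i\delta}Q=\gamma\prod_j(z-w_j)$, the identity $B'(w_m)=\gamma\prod_{j\ne m}(w_m-w_j)/Q(w_m)$, the factorization $\prod_k(z-\zeta_k)(z-\zeta_k^*)=c\,B'(z)Q(z)^2$, and the cancellation of the $\gamma$-dependence through $\prod_mQ(w_m)=\gamma^{-n}\prod_k\overline{a_k}^{\,n}P(1/\overline{a_k})$ --- checks out and gives an explicit value of the minimum; the paper instead gets constancy for free from $\nabla\widetilde W\equiv\mathbf 0$ on the connected smooth arc $S$. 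Likewise your properness argument for the existence of a global minimizer (the external potential is bounded on $\partial\mathbb D$ and the interaction term is bounded below and blows up at collisions) is sound and replaces the paper's argument, which minimizes the continuous slice-minimum $\Phi(\beta)=W(\varphi(\beta))$ over the compact interval $[0,2\pi]$. Both of these substitutions are legitimate, but they only become a proof once the missing Cayley-transform reduction to Semmler--Wegert's real-line lemma is supplied.
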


Let us recall here a recent result of Simanek \cite[Theorem 2.1]{Simanek}.
Briefly, he established that for any configuration of electrons on the unit circle, there is an external field (collection of protons) such that
the electrons are in electrostatic equilibrium (that is, the gradient of the energy is zero).
We are going to refine this result
by determining the number of pairs of protons and their locations using 
the solution curve defined in \eqref{biggersolcurve}.

For the following we need some 
more 
results on Blaschke products.
Namely for given $z_1,z_2,\ldots,z_n\in\bC$, $|z_j|=1$, $z_j\ne z_k$
($j\ne k$),
we need to find a Blaschke product $B(.)$ of
degree $m$,
such that 
\begin{equation}
B(z_j)=\chi \prod_{k=1}^m \frac{z_j-a_k}{1-\overline{a_k}z_j} = 1,
\quad j=1,2,\ldots,n. 
\label{eq:blaschkeinterpol}
\end{equation}
The first
result of this kind was established by Cantor and Phelps in \cite{CantorPhelps}
(for some $m$)
and the stronger form with degree $m\le n-1$ was given by Jones and Ruscheweyh
in \cite{JonesRuscheweyh}, see also a paper by Hjelle \cite{Hjelle2007}.
By using the results of Jones and Ruscheweyh, Hjelle
showed that there is a
Blaschke product $B(z)$
of degree $m=n$
such that \eqref{eq:blaschkeinterpol}
holds,
see \cite{Hjelle2007}, p. 44.
We will use one such particular Blaschke product
$B(z)=B(z_1,z_2,\ldots,z_n;z)$
corresponding to $z_1,z_2,\ldots,z_n$.
Note that Hjelle's Blaschke product is not unique, 
since there is an extra iterpolation condition.
Observe that the extra interpolation condition can be chosen so that
$B'(0)\ne 0$ is satisfied.

\begin{theorem}
\label{thm:reverseproblem}
For distinct $z_1,\ldots,z_n\in \partial \mathbb{D}$ 
consider the Blaschke product $B(z)$ introduced above.
Assume that $B'(0)\ne 0$.

Then there exist $\zeta_1,\zeta_2,\ldots,\zeta_{n-1}\in \bD\setminus\{0\}$
such that the (doubled) energy function $W$, defined in \eqref{W}
has critical point at $(w_1,\ldots,w_n)=(z_1,\ldots,z_n)$
(even regarded as a point of $\bC^n$).
Moreover, 
on $(\partial \mathbb{D})^n$,
$W|_{(\partial \mathbb{D})^n}$ 
has global minimum
at
$(z_1,\ldots,z_n)$.
\end{theorem}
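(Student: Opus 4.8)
The plan is to split the statement into two parts: the global minimality of $W|_{(\partial\mathbb{D})^n}$, which will follow almost directly from Theorem~\ref{thm:mainthm1}, and the stronger assertion that $(z_1,\ldots,z_n)$ is a critical point of $W$ in the full space $\bC^n$, which I expect to require a separate computation.

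For the minimality, I would first pass to the monic normalization $\widetilde{B}:=B/\chi$, where $\chi$ is the unimodular leading coefficient of $B$. Multiplying by $\chi$ does not move the zeros of $B'$, so $\widetilde{B}$ has the same critical points $\zeta_k,\zeta_k^*$, generates the same protons and hence the same $W$ and $\widetilde W$, and satisfies $\widetilde{B}'(0)\neq 0$. Since $B(z_j)=1$, the given points obey $\widetilde{B}(z_j)=\overline{\chi}=e^{i\delta_0}$ for a fixed $\delta_0$, so $(z_1,\ldots,z_n)$ is a solution vector of $\widetilde{B}(e^{it})=e^{i\delta_0}$. Reordering the arguments into the fundamental domain $A$ of \eqref{Asetdef}, the resulting tuple lies on the solution curve $S_\bR$ attached to $\widetilde{B}$, because $S_\bR$ contains every solution of the Blaschke equation lying in $A$. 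By Theorem~\ref{thm:mainthm1} every point of $S_\bR$ is a global minimum of $\widetilde W$; as $W$ is symmetric under permutations of its arguments, its value at $(z_1,\ldots,z_n)$ equals its value at the reordered tuple, whence $W|_{(\partial\mathbb{D})^n}$ attains its global minimum at $(z_1,\ldots,z_n)$.

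For the critical-point claim I would compute the Wirtinger derivatives of $W$. Writing $V(z):=\prod_{k=1}^{n-1}(z-\zeta_k)(z-\zeta_k^*)$ and $U(z):=\prod_{l=1}^{n}(z-z_l)$, the condition $\partial W/\partial w_j=0$ at $w_j=z_j$ is equivalent to
\[
\frac{V'(z_j)}{V(z_j)}=2\sum_{l\neq j}\frac{1}{z_j-z_l},
\]
both sides being finite since $V(z_j)\neq0$ and $B'(z_j)\neq0$ on $\partial\mathbb{D}$ (the latter from $B'(0)\neq0$ and Walsh' theorem). To prove this I would exploit two factorizations over the denominator $D(z):=\prod_{k=1}^{n}(1-\overline{a_k}z)$: factorizing the numerator of $B'$ gives $B'=c_1\,V/D^2$, hence $B''/B'=V'/V-2\,D'/D$; and, because $B=1$ has exactly the $n$ simple roots $z_l$ on the circle, $B-1=c_2\,U/D$, hence $B'/(B-1)=U'/U-D'/D$.

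The decisive step is then to read off constant terms at the simple pole $z=z_j$ in the second identity. From $B'/(B-1)=\tfrac{1}{z-z_j}+\tfrac{B''(z_j)}{2B'(z_j)}+O(z-z_j)$ and $U'/U=\tfrac{1}{z-z_j}+\sum_{l\neq j}\tfrac{1}{z_j-z_l}+O(z-z_j)$, matching the regular parts yields $\tfrac{B''(z_j)}{2B'(z_j)}+\tfrac{D'(z_j)}{D(z_j)}=\sum_{l\neq j}\tfrac{1}{z_j-z_l}$. Inserting $B''(z_j)/B'(z_j)=V'(z_j)/V(z_j)-2\,D'(z_j)/D(z_j)$ from the first identity eliminates the $D'/D$ terms and produces precisely $V'(z_j)/V(z_j)=2\sum_{l\neq j}1/(z_j-z_l)$, so $\partial W/\partial w_j=0$ for every $j$; as $W$ is real, the full gradient in $\bC^n$ vanishes. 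I expect this passage — capturing the radial (normal) component of the force, which Theorem~\ref{thm:mainthm1} does not provide since it only controls the tangential gradient on the circle — to be the main obstacle, whereas the global-minimality part is essentially a bookkeeping reduction to Theorem~\ref{thm:mainthm1}.
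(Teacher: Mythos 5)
Your proposal is correct, and it splits into two halves that relate differently to the paper's proof. The global-minimality half follows essentially the paper's route: normalize to the monic Blaschke product, observe that $(z_1,\ldots,z_n)$ is a solution vector of $\widetilde{B}(e^{it})=e^{i\delta_0}$, place the reordered argument tuple on the solution curve $S_\bR$, and invoke Theorem~\ref{thm:mainthm1}; the paper does the same, only being more careful about fixing the branch of $\delta_0$ in $[\alpha,\alpha+2\pi)$ so that the tuple lands on $S_0$. The critical-point half is where you genuinely diverge. The paper obtains full $\bC^n$-criticality cheaply: the tangential gradient vanishes because the point lies on $S$ (Theorem~\ref{thm:mainthm1}, resting on the cited Theorem~4 of Pap--Schipp), and the normal derivative vanishes by Proposition~\ref{szimmenergiakorvonal}, since the proton configuration $\{\zeta_k,\zeta_k^*\}$ is symmetric with respect to the unit circle. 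You instead verify $\partial W/\partial w_j=0$ directly via the two factorizations $B'=c_1\chi V/D^2$ (valid since by Walsh's theorem and $B'(0)\neq 0$ the numerator of $B'$ is a degree-$(2n-2)$ polynomial with roots exactly $\zeta_k,\zeta_k^*$) and $B-1=c_2U/D$ (valid since $\chi N-D$ has degree exactly $n$, its leading coefficient $\chi-\prod(-\overline{a_k})$ being nonzero as $\prod|a_k|<1$), followed by matching the regular parts of the Laurent expansions at the simple pole $z_j$; I checked the residue bookkeeping and the cancellation of the $D'/D$ terms, and it is correct, using only $V(z_j)\neq 0$ and $B'(z_j)\neq 0$ on the circle, both guaranteed by Walsh's theorem. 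Your computation in effect reproves the Stieltjes-type identity that the paper imports from Pap--Schipp, so it buys a self-contained argument (and even renders the tangential-criticality input from Theorem~\ref{thm:mainthm1} redundant for this half), at the cost of an explicit calculation where the paper gets the normal component for free from symmetry.
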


\section{Some basic propositions}

Recall that it was given in \eqref{W}
the discrete energy of an electron  
configuration $w_1,\ldots,w_n\in\bC$
(with charges $-1$)
in presence of an external field
generated by pairs of fixed protons
$\zeta_1,\zeta_1^*,\zeta_2,\zeta_2^*,\ldots,\zeta_{n-1},\zeta_{n-1}^*$ (with charges $+1/2$ each),
where $\zeta_1,\ldots,\zeta_{n-1}\in\bD$.
Note that actually $W$ is
the double of the physical
energy of the system (see also
\cite{ETNA2005}, p. 22 where they
use this form of
discrete energy).
We will see later on why it is more
convenient to use this "doubled energy".

Sometimes the following exceptional set will be excluded:
\begin{multline}
\label{diszkretkiveteleshalmaz}
 E:= \big\{(w_1,\ldots,w_n,\zeta_1,\ldots,\zeta_{n-1}) \in \bC^{n}\times \bD^{n-1}: \ \\
\zeta_j= 0 \mbox{ for some } j
\mbox{ or }
w_j= w_k \mbox{ for some } j\ne k,
\\
\mbox{ or }
\zeta_j = w_k
\mbox{ or } \zeta_j^* = w_k
\mbox{ for some } j,k
 \big\}.
\end{multline}
This is a closed set with empty interior.
Geometrically, this set covers the cases
when some of the protons are at the origin,
some of the electrons are at the same position
or a proton and an electron are at the same position.
 Let us remark also that
$W=W(w_1,\ldots,w_n)$
is locally
the
real part
of a holomorphic
function
when $\zeta_1,\ldots,\zeta_{n-1}$
are fixed and $W$ is considered
on $(w_1,\ldots,w_n)\in \bC^n$ such that $(w_1,\ldots,w_n,\zeta_1,\ldots,\zeta_{n-1}) \not\in E$.

\medskip

This energy can be generalized
substantially.
Let $\mu$ be a signed measure on $\bC$.
We define the
(doubled)
energy in this case as
\begin{gather}
W_{\mu,1} :=2\sum_{k=1}^n
\int_{\bC} \log|w_k-\zeta| d\mu(\zeta),
\quad
W_{\mu,2} :=
\sum_{\substack{l \ne k\\ 1\le l,k\le n}}
\log|w_l-w_k|
, \text{ and }  \notag
\\
W_\mu(w_1,\ldots,w_n)  :=
W_{\mu,1} - W_{\mu,2}.
\label{Wmu}
\end{gather}
Note that in \eqref{W} we sum over all $l< k$
pairs and there is an extra factor $2$.
In \eqref{Wmu}, the sum is over all $l\ne k$ pairs. Later this second, symmetric expression will be more convenient.

Here, it may happen that $W_{\mu,1}$ or $W_{\mu,2}$ becomes infinity, so we again
introduce the exceptional
set as follows:
\begin{multline}
E_\mu:=
\{\left(w_1,\ldots,w_n\right)\in\bC^n:
\
w_j=w_k \mbox{ for some }
j\ne k
\\
\mbox{ or }
\int_{\bC} \left|\log| w_j-\zeta| \right|d|\mu|(\zeta) = +\infty
\mbox{ for some }
j
\}.
\end{multline}
Note that finiteness of this latter
integral is equivalent to
the finiteness of the potentials of $\mu_+$ and $\mu_-$ at $w_j$
where $\mu_+$, $\mu_-$ are the positive and negative parts of $\mu$ respectively.
Observe that if $(w_1,\ldots,w_n)\not\in E_\mu$, then $W_{\mu,1}$
and $W_{\mu,2}$ are finite, and so is $W_\mu$.

\medskip

An important tool in our investigations is the
Cayley transform and its inverse.
Basically,
it is just a transformation
between a half-plane and
the unit disk, though there is no widely accepted, standard form of it.
We use the following form, which we call inverse Cayley transform
\begin{equation*}
C(z)=C_\theta(z):=
i \frac{1+z e^{-i\theta}}{1-ze^{-i\theta}}
\end{equation*}
 where $\theta\in\bR$ will be specified later.
It is standard to verify that
$C(z)$ maps the unit disk onto the upper half-plane,
$C_\theta(e^{i\theta})=\infty$,
and $C(.)$ maps
bijectively
the unit circle (excluding $e^{i\theta}$)
to the real axis.
Furthermore, $C_\theta(e^{it})$ is
continuous and strictly increasing from $t=\theta$ to $t=\theta+2\pi$,
$C_\theta(e^{it})\rightarrow -\infty$ as $t \rightarrow \theta +0$,
$C_\theta(e^{it})\rightarrow +\infty$ as $t \rightarrow \theta+2\pi -0$.
It is easy to see that $C(z^*)=\overline{C(z)}$
and $C'(z)\ne 0$ (if $z\ne e^{i\theta}$).
Later we will use the
Cayley transform  too:
\[
C_\theta^{-1}(u)=e^{i\theta} \frac{u-i}{u+i}.
\]

Mapping the electrons and protons
by
$C_\theta$,
we define $t_j$ with $t_j=C_\theta(w_j)$.
We  also write $\xi_j:=C_\theta(\zeta_j)$ and accordingly, $\overline{\xi_j}=C_\theta(\zeta_j^*)$
and investigate
the following
new discrete energy:
\begin{equation} \label{V}
V(t_1,\ldots,t_n):=\sum_{k=1}^{n-1} \sum_{j=1}^n\log|(t_j-\xi_k)(t_j-\overline{\xi}_k)|-2\sum_{1\leq j<k\leq n}\log|t_j-t_k|.
\end{equation}

We also define
the (doubled) 
discrete energy  on the real line when the external field is determined by a signed measure $\nu$:
\begin{gather}
V_{\nu,1}  :=
2\sum_{k=1}^n
\int_{\bC} \log|t_k-\xi| d\nu(\xi),
\quad
V_{\nu,2} :=
\sum_{\substack{l \ne k\\ 1\le l,k\le n}}
\log|t_l-t_k| \text{ and }
\notag
\\
V_\nu(t_1,\ldots,t_n):= V_{\nu,1} - V_{\nu,2}.
\end{gather}

We introduce again
the exceptional set corresponding to $\nu$ as follows:
\begin{multline*}
E_\nu:=\{
\left(t_1,\ldots,t_n\right)\in\bC^n:
\
t_j=t_k \mbox{ for some }
j\ne k
\\
\mbox{ or }
\int_{\bC} \left|\log| t_j-\xi| \right|d|\nu|(\xi) = +\infty
\mbox{ for some }
j
\}.
\end{multline*}

\medskip

The next result gives a somewhat surprising connection
how the
 inverse
Cayley transform carries over energy.
Actually, there is a cancellation in the background
which makes it work.

\begin{proposition}
\label{contenergtrfall}
Fix $\theta\in\bR$ and let
$\mu$ be a signed measure
on $\bC$
with compact support
such that
$\mu(\{0\})=0$,
$\mu(\bC)=n-1$.
Write $\nu:=\mu\circ C_\theta^{-1}$, that is,
$\nu(B)=\mu(C_\theta^{-1}(B))$
for every Borel set $B$.

Assume that $w_1,\ldots,w_n\in\bC$
and
$(w_1,\ldots,w_n)\not\in E_\mu$
and
\begin{equation}
 \int_\bC \log|\zeta-e^{i\theta}| d\mu(\zeta) \text{ is finite.}
\label{cnuvegesfelt-zeta}
\end{equation}

Then
with
$t_1,\ldots,t_n\in\bC$ where $t_j=C_\theta(w_j)$,
we know that
$(t_1,\ldots,t_n)\not\in E_\nu$,
$W_\mu(w_1,\ldots,w_n)$
and
$V_\nu(t_1,\ldots,t_n)$ are finite and
we can write
\begin{equation}
\label{magickanccont}
 W_\mu\left(w_1,\ldots,w_n\right)
 =
 V_\nu\left(t_1,\ldots,t_n\right)
 +c
\end{equation}
where $c$ is a finite constant, namely
\begin{equation} \label{c_konst}
c=n(n-1)\log(2)
-2n \int_\bC \log|\xi+i|d\nu(\xi).
\end{equation}

\end{proposition}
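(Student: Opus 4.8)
The plan is to reduce the whole statement to a single pointwise identity for the logarithmic kernel under the inverse Cayley transform, and then to exploit a cancellation that occurs once this kernel is summed over the electron--electron and electron--proton pairs. First I would compute, for arbitrary $w,\zeta\in\bC\setminus\{e^{i\theta}\}$, the difference $C_\theta(w)-C_\theta(\zeta)$. Substituting $C_\theta(z)=i(1+ze^{-i\theta})/(1-ze^{-i\theta})$ and putting the two fractions over a common denominator, the numerator collapses to $2(w-\zeta)e^{-i\theta}$, giving
\[
C_\theta(w)-C_\theta(\zeta)=\frac{2i(w-\zeta)e^{-i\theta}}{(1-we^{-i\theta})(1-\zeta e^{-i\theta})}.
\]
Since $|1-ze^{-i\theta}|=|z-e^{i\theta}|$, taking moduli and logarithms produces the master identity
\[
\log|C_\theta(w)-C_\theta(\zeta)|=\log 2+\log|w-\zeta|-\log|w-e^{i\theta}|-\log|\zeta-e^{i\theta}|,
\]
from which everything else is bookkeeping.

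Next I would insert this identity into the two pieces of $V_\nu$. For the interaction part I set $w=w_l$, $\zeta=w_k$, recall $t_j=C_\theta(w_j)$, and sum over ordered pairs $l\ne k$; since each index appears in exactly $2(n-1)$ of these pairs, this yields
\[
V_{\nu,2}=W_{\mu,2}+n(n-1)\log 2-2(n-1)\sum_{j=1}^{n}\log|w_j-e^{i\theta}|.
\]
For the external-field part I set $w=w_k$ and integrate the master identity against $\mu$ in $\zeta$, using the change of variables $\xi=C_\theta(\zeta)$ (so that $\int f\circ C_\theta\,d\mu=\int f\,d\nu$) together with $\mu(\bC)=n-1$, obtaining
\[
V_{\nu,1}=W_{\mu,1}+2n(n-1)\log 2-2(n-1)\sum_{k=1}^{n}\log|w_k-e^{i\theta}|-2n\int_\bC\log|\zeta-e^{i\theta}|\,d\mu(\zeta).
\]
Forming $V_\nu=V_{\nu,1}-V_{\nu,2}$, the entire $\sum_j\log|w_j-e^{i\theta}|$ contribution cancels---this is the cancellation mentioned before the statement---and I am left with
\[
V_\nu=W_\mu+n(n-1)\log 2-2n\int_\bC\log|\zeta-e^{i\theta}|\,d\mu(\zeta).
\]

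Finally I would convert the residual integral into an integral against $\nu$. A second short computation gives $\xi+i=2i/(1-\zeta e^{-i\theta})$ when $\xi=C_\theta(\zeta)$, hence $|\xi+i|=2/|\zeta-e^{i\theta}|$ and $\log|\zeta-e^{i\theta}|=\log 2-\log|\xi+i|$. Integrating against $\mu$, again using $\mu(\bC)=n-1$ and the change of variables, turns the residual term into $2n\int_\bC\log|\xi+i|\,d\nu(\xi)-2n(n-1)\log 2$; collecting the constants and solving \eqref{magickanccont} for $W_\mu$ yields exactly $c=n(n-1)\log 2-2n\int_\bC\log|\xi+i|\,d\nu(\xi)$, which is \eqref{c_konst}. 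I expect the main obstacle to lie not in this algebra but in the measure-theoretic justifications: that $(t_1,\ldots,t_n)\notin E_\nu$ (immediate from the injectivity of $C_\theta$ and from $(w_1,\ldots,w_n)\notin E_\mu$), and, more delicately, that splitting the integral of $\log|w_k-\zeta|$ into the four terms of the master identity is legitimate. This last point is exactly where hypothesis \eqref{cnuvegesfelt-zeta} is used: finiteness of $\int_\bC\log|\zeta-e^{i\theta}|\,d\mu$, combined with the compact support of $\mu$ (which bounds the kernel from above) and with $w_k\ne e^{i\theta}$ (so that each $t_k$ is finite), guarantees that every integral appearing above converges absolutely, so that the terms may be separated and recombined freely.
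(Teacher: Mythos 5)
Your proposal is correct and follows essentially the same route as the paper's proof: both rest on the factorization of the logarithmic kernel under the M\"obius map (your identity $\log|C_\theta(w)-C_\theta(\zeta)|=\log 2+\log|w-\zeta|-\log|w-e^{i\theta}|-\log|\zeta-e^{i\theta}|$ is the paper's identity read through $|t+i|=2/|w-e^{i\theta}|$), the same cancellation driven by $\nu(\bC)=n-1$, and the same final conversion between $\int\log|\zeta-e^{i\theta}|\,d\mu$ and $\int\log|\xi+i|\,d\nu$. The only difference is cosmetic --- you express the correction terms in the $w$-plane and convert at the end, whereas the paper works directly with $C_\theta^{-1}$ and the $|\,\cdot\,+i|$ terms --- and your handling of the convergence issues matches the paper's level of detail.
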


\begin{proof}
It is straightforward to verify that
$(t_1,t_2,\ldots,t_n)\not\in E_\nu$.
Furthermore,
\begin{multline*}
 \int_\bC \log|\xi+i| d\nu(\xi)
 =
\int_\bC \log|C_\theta(\zeta)+i| d\mu(\zeta)
=
\int_\bC \log\left|
i
\left(1+
\frac{1+\zeta e^{-i\theta}}
{1-\zeta e^{i\theta}}
\right)
\right| d\mu(\zeta)
\\
=\int_\bC \log(2)-\log|\zeta-e^{i\theta}| d\mu(\zeta),
\end{multline*}
so \eqref{cnuvegesfelt-zeta} is equivalent to
\begin{equation}
 \int_\bC \log|\xi+i| d\nu(\xi) \text{ is finite.}
\label{cnuvegesfelt}
\end{equation}
Note that this entails the finiteness of $c$ defined in \eqref{c_konst}.

With the notation of the Proposition,
\begin{multline}
\label{magicproof}
W_\mu\left(w_1,\ldots,w_n\right)
-V_\nu\left(t_1,\ldots,t_n\right)
=
2\sum_{k=1}^n \int_\bC \log|w_k-\zeta| d\mu(\zeta)
\\
- \sum_{\substack{j \ne k\\ 1\le j,k\le n}} \log|w_j-w_k|
- 2\sum_{k=1}^n \int_\bC \log|t_k-\xi| d\nu(\xi)
+
\sum_{\substack{j \ne k\\ 1\le j,k\le n}} \log|t_j-t_k|
\end{multline}
where we investigate the difference of the integrals and difference of the sums separately.
So we write
\begin{multline*}
 \int_\bC \log|w_k-\zeta| d\mu(\zeta)
 -
 \int_\bC \log|t_k-\xi| d\nu(\xi)
 \\
 =\int_\bC \log|C_\theta ^{-1}(t_k)-C_\theta ^{-1}(\xi)| d\nu(\xi)
 -\int_\bC \log|t_k-\xi| d\nu(\xi)
\\ =
\int_\bC \log\left|e^{i\theta} \left(\frac{t_k-i}{t_k+i}-\frac{\xi-i}{\xi+i}\right) \right| -\log\left|t_k-\xi\right|
d\nu(\xi)
\\=
\int_\bC \log(2) +\log\left|\frac{1}{(t_k+i)(\xi+i)}\right|
d\nu(\xi)
\\=
\int_\bC -\log|\xi+i| d\nu(\xi)
+\left(\log(2)-\log|t_k+i|\right) \nu(\bC),
\end{multline*}
where this last integral exists, by assumption \eqref{cnuvegesfelt}.
Similarly,
\begin{multline*}
    \log|t_j-t_k|-\log|w_j-w_k|=\log|t_j-t_k|-\log|C_\theta ^{-1}(t_j)-C_\theta ^{-1}(t_k)|
    \\=
    \log|t_j-t_k|-\log\left|e^{i\theta} \left(\frac{t_j-i}{t_j+i}\right)-e^{i\theta} \left(\frac{t_k-i}{t_k+i}\right)\right|
    \\=
    -\log(2)+
    \log|t_j+i|+\log|t_k+i|.
\end{multline*}

Substituting into \eqref{magicproof},
we get
\begin{align*}
    W_\mu \left(w_1,\ldots,w_n\right)
& -V_\nu\left(t_1,\ldots,t_n\right)
\\& =
2\sum_{k=1}^n \left(\int_\bC -\log|\xi+i| d\nu(\xi)
+\left(\log(2)-\log|t_k+i|\right) \nu(\bC) \right)
\\
& \qquad \qquad +\sum_{\substack{j \ne k\\ 1\le j,k\le n}} \left( -\log(2)+\log|t_j+i|+\log|t_k+i| \right)
\\& =
-2\nu(\bC)\sum_{k=1}^n \log|t_k+i|+
2n\nu(\bC)\log(2)-
2 n \int_{\bC} \log|\xi+i| d\nu(\xi)
\\ & \qquad \qquad
-n(n-1)\log(2) +
2(n-1)\sum_{k=1}^n \log|t_k+i|
\\& =
n(n-1)\log(2)
-2n \int_\bC \log|\xi+i|d\nu(\xi),
\end{align*}
where we used that
$\nu(\bC)=n-1$.
\end{proof}

\begin{remark}
Since $\mu$ has compact support,
$\supp\nu$  is disjoint from $-i$,
moreover, their distance is positive.
Hence the logarithm in the integral in \eqref{c_konst}
is bounded from below.
It is not necessarily bounded from above,
but we assume  \eqref{cnuvegesfelt} directly.
Instead of supposing \eqref{cnuvegesfelt},
we may suppose
that $\mu$ and $\theta$ (from Cayley transform) are such that $\supp\mu$ and $e^{i\theta}$
are of positive distances from each other.
This would ensure that
$\supp \nu$ remains bounded
entailing that
the logarithm in the integral in \eqref{cnuvegesfelt}
is bounded from above.
In other words, if $\supp \mu$ is compact and $e^{i\theta}\not\in \supp\mu$, then \eqref{cnuvegesfelt} holds.
\end{remark}

We note that
this Proposition \ref{contenergtrfall} extends the result of Theorem 6 in Pap, Schipp \cite{PapSchipp2015}
that we allow arbitrary signed external fields
in place of discrete protons located symmetrically with respect to the unit circle.

\begin{proposition}
\label{energtrfalllim}
We maintain the assumptions and notations of Proposition \ref{contenergtrfall}.
Let $\ell\in\{1,\ldots,n\}$ and let $w_j$, $j\ne \ell$ be fixed.

Assume that
\begin{equation}
e^{i\theta} \not\in\supp\mu
\label{hatarpontnagyonszep}
\end{equation}
and assume further that
replacing $w_\ell$ by $e^{i\theta}$, we have
\begin{equation}
    (w_1,\ldots,e^{i\theta},\ldots,w_n)\not\in E_\mu. \label{hatarpontszep}
\end{equation}
If $w_\ell\rightarrow e^{i\theta}$,
then $|t_\ell|=|C_\theta(w_\ell)|\rightarrow\infty$
and we get that
\begin{equation}
 W_\mu(w_1,\ldots,w_{\ell-1},e^{i\theta},w_{\ell+1},\ldots,w_n)
 = V_\nu(t_1,\ldots,t_{\ell-1},\infty,t_{\ell+1},\ldots,t_n) +c
 \label{magickcanclim}
\end{equation}
where $c$ is the constant defined in \eqref{c_konst} and
\begin{multline} \label{Vhianyos}
 V_\nu(t_1,\ldots,t_{\ell-1},\infty,t_{\ell+1},\ldots,t_n)
 :=  V_\nu(t_1,\ldots,t_{\ell-1},t_{\ell+1},\ldots,t_n)
 \\ = 2\sum_{\substack{j=1\\j\ne\ell}}^n
 \int_\bC \log|t_j-\xi| d\nu(\xi)
  -\sum_{\substack{1\leq j,k\leq n\\j\ne \ell,k\ne \ell,j\ne k}}\log|t_j-t_k|.
\end{multline}
\end{proposition}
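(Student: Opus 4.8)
The plan is to deduce the limiting identity \eqref{magickcanclim} by letting $w_\ell\to e^{i\theta}$ in the finite-point identity \eqref{magickanccont} of Proposition \ref{contenergtrfall} and controlling both sides separately in that limit. First I would record the geometric facts about the Cayley image. Since $C_\theta(e^{i\theta})=\infty$ and $C_\theta$ is continuous and injective off $e^{i\theta}$, the convergence $w_\ell\to e^{i\theta}$ forces $|t_\ell|=|C_\theta(w_\ell)|\to\infty$, while the remaining $t_j=C_\theta(w_j)$, $j\ne\ell$, stay fixed. I would also note that $\supp\nu=C_\theta(\supp\mu)$ is compact and bounded: this uses exactly hypothesis \eqref{hatarpontnagyonszep}, namely $e^{i\theta}\notin\supp\mu$, which keeps $C_\theta$ bounded on $\supp\mu$. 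Boundedness of $\supp\nu$ is what will make the forthcoming asymptotics uniform.

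For the left-hand side I would invoke continuity of $W_\mu$ at the configuration $(w_1,\ldots,e^{i\theta},\ldots,w_n)$. Hypothesis \eqref{hatarpontszep} guarantees that this point lies outside $E_\mu$, so $W_\mu$ is finite there; and since $e^{i\theta}\notin\supp\mu$, the potential $w\mapsto\int_\bC\log|w-\zeta|\,d\mu(\zeta)$ is smooth, hence continuous, throughout a neighbourhood of $e^{i\theta}$ disjoint from $\supp\mu$. The interaction terms $\log|w_\ell-w_j|$ converge to the finite values $\log|e^{i\theta}-w_j|$ because the coordinates of the limit configuration are distinct. Consequently $\lim_{w_\ell\to e^{i\theta}}W_\mu(w_1,\ldots,w_\ell,\ldots,w_n)=W_\mu(w_1,\ldots,e^{i\theta},\ldots,w_n)$, which is the left-hand side of \eqref{magickcanclim}.

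The heart of the matter, and the step I expect to be the main obstacle, is the right-hand side: showing that $V_\nu(t_1,\ldots,t_n)$ converges to the reduced energy \eqref{Vhianyos} as $|t_\ell|\to\infty$. I would isolate the terms of $V_\nu$ that involve $t_\ell$, namely
\[
2\int_\bC\log|t_\ell-\xi|\,d\nu(\xi)-2\sum_{j\ne\ell}\log|t_\ell-t_j|,
\]
where the factor $2$ in the second term comes from the symmetric $j\ne k$ summation, and expand each logarithm as $\log|t_\ell-x|=\log|t_\ell|+\log|1-x/t_\ell|$. Boundedness of $\supp\nu$ and finiteness of the total variation of $\nu$ give $\int_\bC\log|t_\ell-\xi|\,d\nu(\xi)=\nu(\bC)\log|t_\ell|+o(1)$, and likewise $\sum_{j\ne\ell}\log|t_\ell-t_j|=(n-1)\log|t_\ell|+o(1)$. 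Since $\nu(\bC)=\mu(\bC)=n-1$, the two $\log|t_\ell|$ contributions cancel exactly, so the $\ell$-dependent part tends to $0$.

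Subtracting the $\ell$-dependent block from $V_\nu(t_1,\ldots,t_n)$ leaves precisely the reduced sum of \eqref{Vhianyos}, whence $\lim_{|t_\ell|\to\infty}V_\nu(t_1,\ldots,t_n)=V_\nu(t_1,\ldots,t_{\ell-1},t_{\ell+1},\ldots,t_n)$. Combining this with the left-hand-side limit and the constant $c$ from \eqref{c_konst}, which depends only on $\nu$ and not on $w_\ell$, yields \eqref{magickcanclim}. The only delicate point is the cancellation, which hinges on the charge balance $\nu(\bC)=n-1$ matching the $n-1$ remaining electrons; everything else reduces to uniform logarithmic estimates on the compact set $\supp\nu$.
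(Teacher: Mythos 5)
Your proposal is correct and follows essentially the same route as the paper: continuity of $W_\mu$ near $e^{i\theta}$ (justified by \eqref{hatarpontszep} and \eqref{hatarpontnagyonszep}) on the left, and on the right the isolation of the $t_\ell$-dependent block of $V_\nu$, the expansion $\log|t_\ell-x|=\log|t_\ell|+\log|1-x/t_\ell|$, and the exact cancellation of the $\log|t_\ell|$ terms via $\nu(\bC)=n-1$ against the $n-1$ remaining electrons, with the error terms controlled by the compactness of $\supp\nu$. No gaps; this matches the paper's argument.
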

\begin{proof}
First, we discuss why the integrals appearing here are finite.
By slightly abusing the notation,
$W_\mu(w_\ell):=W_\mu(w_1,\ldots,w_\ell,\ldots,w_n)$ is finite at $w_\ell=e^{i\theta}$, because of \eqref{hatarpontszep}.
Assumption \eqref{hatarpontnagyonszep} implies
that there is a neighborhood $U$ of $e^{i\theta}$
such that its closure $U^-$  is disjoint from $\supp\mu$,
$U^-\cap \supp\mu=\emptyset$.
Therefore $W_\mu(w)$
 is also finite when $w\in U$,
moreover $W_\mu(.)$ is continuous there.
Similarly, we use $V_\nu(t):=V_\nu(t_1,\ldots,t_{\ell-1},t,t_{\ell+1},\ldots,t_n)$ (abusing the notation again).
Obviously, $C_\theta(U)$ is an unbounded open set on
the extended complex plane $\bC_\infty$
and is a neighborhood of infinity.
By Proposition \ref{contenergtrfall},
 $V_\nu(t)$ is defined on  $C_\theta(U)\setminus\{\infty\}$, has finite value and is
 continuous there.
Moreover, $V_\nu(t)$ has finite limit as $t\rightarrow\infty$.
By \eqref{hatarpontszep} and \eqref{hatarpontnagyonszep},
$(w_1,\ldots,w_{\ell-1},w,w_{\ell+1},\ldots,w_n)\not \in E_\mu$ for $w\in U$.
Hence $(t_1,\ldots,t_{\ell-1},t,t_{\ell+1},\ldots,t_n)\not \in E_\nu$ for $t\in C_\theta(U)\setminus\{\infty\}$.
This also implies that $\int_\bC\log|t_j-\xi| d\nu(\xi)$ is finite, $j=1,\ldots,n$, $j\ne \ell$,
which are the integrals appearing on the right of \eqref{Vhianyos}.

Regarding $V_\nu$, we write
\begin{multline*}
\lim_{t_\ell \rightarrow \infty} V_\nu(t_\ell)
=
\lim_{t_\ell \rightarrow \infty}\left(  2\sum_{j=1}^n
\int_\bC \log|t_j-\xi| d\nu(\xi)
    -\sum_{\substack{1\leq j,k\leq n \\ j\ne k}}\log|t_j-t_k|\right)
\\=
2\sum_{\substack{j=1\\j\neq \ell}}^n \int_\bC \log|t_j-\xi| d\nu(\xi)
    -\sum_{\substack{1\leq j, k \leq n \\j\neq \ell, k\neq \ell}}
    \log|t_j-t_k|
\\
+ \lim_{t_\ell \rightarrow \infty}
    \left(
    2\int_{\bC} \log|t_\ell-\xi| d\nu(\xi)
    -\sum_{\substack{1 \leq j, k \leq n \\ k \neq j, k=\ell \ \text{or} \  j=\ell} }
    \log|t_j-t_k| \right)
    \\=
    V(t_1,\ldots,t_{\ell-1},t_{\ell+1},\ldots,t_n),
\end{multline*}
where in the last step we used the following calculation.
\begin{multline*}
    \lim_{t_\ell \rightarrow \infty}\left(
    2\int_\bC \log|t_\ell-\xi| d\nu(\xi)
    -\sum_{\substack{1 \leq j, k \leq n \\ k \neq j, k=\ell \ \text{or} \  j=\ell}}
    \log|t_j-t_k| \right)
\\
= \lim_{t_\ell \rightarrow\infty} 2 \int_\bC \log|t_\ell|
+\log\left|1-\frac{\xi}{t_\ell}\right| d\nu(\xi)
-
2 \sum_{\substack{1 \leq j \leq n \\ j\ne \ell} } {
\left( \log|t_\ell| + \log\left|1-\frac{t_j}{t_\ell}\right| \right)}
\end{multline*}
where $\int_\bC \log|t_\ell| d\nu(\xi)=(n-1) \log|t_\ell|$
so the first term in the integral and in the sum cancel each other, by $\nu(\bC)=n-1$.
Regarding the second term in the sum, it tends to zero.
The second term in the integral also tends to zero, because the support of $\nu$ is compact, hence $\log|1+\xi/t_\ell|$ tends to $0$ uniformly.

Using this calculation,
\eqref{magickanccont} from Proposition \ref{contenergtrfall}
and the properties of $W_\mu$
and $C_\theta$ we get that
\begin{multline*}
W_\mu(e^{i\theta})
=
\lim_{w_\ell \to e^{i\theta}} W_\mu(w_\ell)
\\=
\lim_{t_\ell \to \infty}
\left(V_\nu(t_\ell)+c\right)
=
V_\nu(t_1,\ldots,t_{\ell-1},t_{\ell+1},\ldots,t_n)+c.
\end{multline*}
\end{proof}

Based on the above proposition, it is justified to extend the definition of $V_\nu$ by continuity as
$V_\nu(t_1,\ldots,t_{\ell-1},\infty,t_{\ell+1},\ldots,t_n):=V_\nu(t_1,\ldots,t_{\ell-1},t_{\ell+1},\ldots,t_n)$
 in case $t_\ell$
becomes $\pm \infty$.

Now we are going to relate the critical points of
$W_\mu$ and $V_\nu$ when the configurations of the electrons are restricted to the unit circle (or to the real line).

When the electrons are restricted
to the unit circle, that is,
\begin{equation}
|w_j|=1, \qquad j=1,\ldots,n
\label{electronsonthecircle}
\end{equation}
we are going to introduce the
tangential
gradient as follows.
In this case, in addition to supposing that $\mu$ has compact support, we assume that $\supp \mu$ is disjoint from the unit circle.

We write
\begin{gather}
w_j=e^{i\tau_j}, \quad j=1,\ldots,n,
\label{wjtauj}
\qquad
\widetilde{W}_\mu(\tau_1,\ldots,\tau_n)
:=
W_\mu\left(e^{i\tau_1},\ldots,e^{i\tau_n}\right).
\end{gather}

We call $\nabla \widetilde{W}_\mu$ the
tangential
gradient of $W_\mu$.
$\nabla \widetilde{W}_\mu$ of $\widetilde{W_\mu}$
has special meaning with respect to
the complex derivative of $W_\mu$:
it is the tangential component  of $\nabla W_\mu$ with respect to the unit circle.
Similar distinction also appears in \cite{Simanek},
see the definitions of $\Gamma$-normal
electrostatic equilibrium and
total electrostatic equilibrium on p.~2255.
This total electrostatic equilibrium appears in Theorem 2, \cite{PapSchipp2001} which will be used later.

\begin{proposition}
\label{szimmenergiavalos}
Let $\nu$ be a signed measure on $\bC$
with compact support.
Assume that $\supp \nu$ is disjoint from the real line and $\nu$ is symmetric with respect to
the
real line: $\nu(H)=\nu(\overline{H})$
where $H\subset \{\Im(u)>0\}$ is a Borel set and $\overline{H}=\{\overline{u}:\ u\in H\}$ denotes the complex conjugate.

Then for $u_1,\ldots,u_n\in\bR$ we have for the $j$-th imaginary directional derivative
(with direction $i{\bf e}_j:= i (0,\ldots,0,1,0,\ldots,0)$)
that
\begin{multline}
\partial_{i{\bf e}_j} V_\nu(u_1,\ldots, u_n)
\\:=
\lim_{v_j\rightarrow 0}
\frac{V_\nu(u_1,\ldots,u_j +i v_j,\ldots,u_n)-V_\nu(u_1,\ldots, u_n)}{v_j}
=0.
\end{multline}
\end{proposition}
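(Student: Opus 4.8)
The plan is to fix the index $j$ and regard $V_\nu$ as a function of the single complex variable $t_j$, holding the remaining coordinates fixed at the real values $u_k$ ($k\ne j$). Collecting the terms of $V_\nu$ that depend on $t_j$, namely the integral $2\int_\bC \log|t_j-\xi|\,d\nu(\xi)$ and the interaction terms $-2\sum_{k\ne j}\log|t_j-t_k|$, I would exhibit the $t_j$-dependent part as the real part of a holomorphic function:
\[
V_\nu = \Re\, G(t_j) + (\text{terms free of } t_j), \qquad G(z):=2\int_\bC \log(z-\xi)\,d\nu(\xi)-2\sum_{k\ne j}\log(z-u_k).
\]
A local holomorphic branch of each logarithm exists near $z=u_j$: by hypothesis $\supp\nu$ is compact and disjoint from $\bR$, so $z-\xi$ stays bounded away from $0$ for $z$ near the real axis, uniformly in $\xi\in\supp\nu$; and $z-u_k$ stays away from $0$ near $z=u_j$ since the $u_k$ are distinct. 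Compactness of $\supp\nu$ likewise legitimizes differentiation under the integral sign.

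Next I would translate the imaginary directional derivative into a statement about $G'$. Writing $t_j=u_j+iv_j$, the chain rule gives $\partial_{v_j}\Re G=\Re\!\big(iG'(t_j)\big)=-\Im G'(t_j)$, so that
\[
\partial_{i{\bf e}_j} V_\nu(u_1,\ldots,u_n) = -\,\Im G'(u_j), \qquad G'(u_j)=2\int_\bC\frac{d\nu(\xi)}{u_j-\xi}-2\sum_{k\ne j}\frac{1}{u_j-u_k}.
\]
Everything then reduces to showing that $G'(u_j)$ is real. The finite sum is manifestly real because all $u_k\in\bR$, so the heart of the matter is the Cauchy-type integral $I:=\int_\bC(u_j-\xi)^{-1}\,d\nu(\xi)$.

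The crux — and the only place the symmetry hypothesis is used — is to show that $I$ is real, and I would chain two observations. First, since $u_j\in\bR$ and $\nu$ is a real signed measure, complex conjugation gives $\overline{I}=\int_\bC(u_j-\overline\xi)^{-1}\,d\nu(\xi)$. Second, the assumed invariance of $\nu$ under $\xi\mapsto\overline\xi$ yields, by that same change of variables, $\int_\bC(u_j-\overline\xi)^{-1}\,d\nu(\xi)=\int_\bC(u_j-\xi)^{-1}\,d\nu(\xi)=I$. Combining the two, $\overline I=I$, so $I\in\bR$, whence $G'(u_j)\in\bR$ and $\Im G'(u_j)=0$, giving the desired vanishing. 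I expect no genuine obstacle here: the symmetry cancellation is short once set up, and the only technical care needed is recording the analyticity of $G$ and the validity of differentiating the potential of $\nu$ under the integral, both of which are guaranteed by $\supp\nu$ being compact and bounded away from the real line.
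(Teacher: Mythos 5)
Your argument is correct, but it takes a more computational route than the paper. The paper's proof never computes the derivative at all: it observes that, with $\nu$ conjugation-invariant and the other coordinates real, the map $v_j\mapsto V_\nu(u_1,\ldots,u_j+iv_j,\ldots,u_n)$ is even (replace $u_j+iv_j$ by its conjugate and use $\nu(H)=\nu(\overline H)$ together with $\log|\overline{u}-u_k|=\log|u-u_k|$), so its derivative at $v_j=0$ equals its own negative and hence vanishes. You instead identify the $t_j$-dependent part of $V_\nu$ as $\Re G$ for a locally holomorphic $G$, reduce the directional derivative to $-\Im G'(u_j)$, and use the same symmetry of $\nu$ to show the Cauchy transform $\int(u_j-\xi)^{-1}\,d\nu(\xi)$ is real. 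Both hinge on the identical reflection symmetry; yours buys an explicit formula for the normal component of the force (useful if one wanted quantitative information), at the cost of justifying branch choices and differentiation under the integral, while the paper's evenness argument sidesteps all of that in two lines. One small point common to both: the difference quotient only makes sense off the exceptional set, i.e.\ for distinct $u_k$ — you use this implicitly when you say $z-u_k$ stays away from $0$, and the paper leaves it implicit as well.
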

Roughly speaking, if the external field is symmetric, then the forces moving the electrons will keep the
electrons on the real line (all coordinates of gradient are parallel with the real line).

\begin{proposition}
\label{szimmenergiakorvonal}
Let $\mu$ be a signed measure on $\bC$
with compact support.
Assume that $\supp \mu$ is disjoint from the unit circle
and $\mu$ is symmetric with respect to
the
unit circle: $\mu(H)=\mu(H^*)$
where $H\subset \{|w|<1\}$ is a Borel set and $H^*=\{1/\overline{w}:\ w\in H\}$ denotes the inversion of $H$.

Then for $|w_1|=\ldots=|w_n|=1$, we have for
the $j$-th normal derivative (with direction $w_j{\bf e}_j$) that
\begin{multline}
\partial_{w_j{\bf e}_j} W_\mu(w_1,\ldots, w_n)
\\:=
\lim_{\varepsilon\rightarrow 0}
\frac{W_\mu(w_1,\ldots,w_j +\varepsilon w_j,\ldots,w_n)-W_\mu(w_1,\ldots, w_n)}{\varepsilon}
=0.
\end{multline}
\end{proposition}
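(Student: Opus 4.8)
The plan is to reduce the statement to a direct computation of the radial derivative and to exploit the reflection symmetry of $\mu$ together with two elementary identities valid on $\partial\bD$. Since only the terms of $W_\mu$ containing $w_j$ matter, I would first isolate
\[
f(w):=2\int_\bC\log|w-\zeta|\,d\mu(\zeta)-2\sum_{k\ne j}\log|w-w_k|,
\]
so that $W_\mu(w_1,\dots,w_n)=f(w_j)+(\text{terms free of }w_j)$, and observe that the prescribed normal derivative is the real directional derivative of $f$ in the radial direction $w_j$. For a real-valued $f$ one has $\partial_{v}f=2\Re\!\left[v\,\partial_w f\right]$, and here $\partial_w f(w)=\int_\bC\frac{d\mu(\zeta)}{w-\zeta}-\sum_{k\ne j}\frac{1}{w-w_k}$. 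Hence the quantity to be shown to vanish is
\[
\partial_{w_j\mathbf e_j}W_\mu=2\,\Re\!\left[\,w_j\!\int_\bC\frac{d\mu(\zeta)}{w_j-\zeta}\,\right]-2\sum_{k\ne j}\Re\frac{w_j}{w_j-w_k}.
\]

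For the electron--electron part I would use that for $|w_j|=|w_k|=1$ one has $\Re\dfrac{w_j}{w_j-w_k}=\tfrac12$ (multiply by the conjugate of the denominator and use $|w_j|=|w_k|=1$), so this part contributes exactly $-(n-1)$. For the field part I would symmetrize the integral using the hypothesis $\mu(H)=\mu(H^*)$: since the inversion $\zeta\mapsto\zeta^*=1/\overline\zeta$ is a $\mu$-preserving involution, $\int_\bC g\,d\mu=\int_\bC g(\zeta^*)\,d\mu$, whence
\[
\Re\!\int_\bC\frac{w_j}{w_j-\zeta}\,d\mu(\zeta)=\tfrac12\int_\bC\Re\!\left[\frac{w_j}{w_j-\zeta}+\frac{w_j}{w_j-\zeta^*}\right]d\mu(\zeta).
\]
The crucial elementary identity is that for $|w_j|=1$ and $\zeta\ne0$ with $\zeta,\zeta^*\ne w_j$,
\[
\Re\!\left[\frac{w_j}{w_j-\zeta}+\frac{w_j}{w_j-\zeta^*}\right]=1 ,
\]
which I would prove by writing the second fraction as $a/(a-1)$ with $a=w_j\overline\zeta$, noting the common denominator $|w_j-\zeta|^2=|w_j\overline\zeta-1|^2=1+|\zeta|^2-2\Re a$, and checking that the two numerators sum to it. Thus the field part equals $\mu(\bC)$, and the normal derivative equals $\mu(\bC)-(n-1)$, which is $0$ under the standing normalization $\mu(\bC)=n-1$ in force throughout the paper. (The hypothesis $\supp\mu\cap\partial\bD=\emptyset$ guarantees all denominators are nonzero and all integrals finite.)

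The main obstacle, and the only place requiring a genuine idea, is the field part: one must see that the right way to use the symmetry is to pair $\zeta$ with its reflection $\zeta^*$, and that this pairing produces the constant $1$ independently of $\zeta$; everything else is routine. Conceptually, the result is the mirror image of Proposition \ref{szimmenergiavalos} under the inverse Cayley transform: by Proposition \ref{contenergtrfall} we have $W_\mu=V_\nu+c$ with $\nu=\mu\circ C_\theta^{-1}$ symmetric about $\bR$, the radial direction $w_j$ maps under $C_\theta$ to the direction $w_jC_\theta'(w_j)$, which is a real multiple of $i$ since $C_\theta$ carries $\partial\bD$ conformally onto $\bR$, and so the normal derivative of $W_\mu$ is a real multiple of the imaginary directional derivative of $V_\nu$, which vanishes by Proposition \ref{szimmenergiavalos}. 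This alternative route presupposes exactly the hypotheses $\mu(\bC)=n-1$ and $0\notin\supp\mu$ needed to invoke Proposition \ref{contenergtrfall}, which is why I prefer the self-contained computation above.
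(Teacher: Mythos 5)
Your proof is correct, but it takes a genuinely different route from the paper. The paper disposes of this proposition in one sentence: it transfers the configuration to the real line via the inverse Cayley transform, invokes $W_\mu=V_\nu+c$ (Proposition \ref{contenergtrfall}) together with Proposition \ref{szimmenergiavalos}, and uses that a conformal map is locally orthogonal, so the normal direction to the circle goes to the normal direction to the line --- i.e.\ exactly the ``alternative route'' you sketch in your last paragraph and then set aside. Your primary argument is instead a self-contained computation: the formula $\partial_v f=2\Re[v\,\partial_w f]$, the identity $\Re\frac{w_j}{w_j-w_k}=\tfrac12$ on the circle, and the pairing identity $\Re\bigl[\frac{w}{w-\zeta}+\frac{w}{w-\zeta^*}\bigr]=1$ for $|w|=1$ (which I checked; it is right). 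What your computation buys that the paper's argument hides is the exact value of the normal derivative, namely $\mu(\bC)-(n-1)$: this shows the proposition as literally stated is \emph{false} without the normalization $\mu(\bC)=n-1$ (e.g.\ $n=1$, $\mu=\tfrac12\delta_{1/2}+\tfrac12\delta_{2}$, $w_1=1$ gives normal derivative $1$). That normalization is not among the stated hypotheses of the proposition, but it is required equally by the paper's own proof (Proposition \ref{contenergtrfall} assumes it), so you are right to flag it as a standing assumption rather than a defect of your argument; your version simply makes the dependence explicit and quantitative, at the cost of being longer than the paper's one-line reduction.
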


Note that because $\mu$ has compact support and is symmetric with respect to the unit circle,
we necessarily have that $0$ is not in $\supp\mu$.

Roughly speaking,
Proposition \ref{szimmenergiakorvonal} states
that if the measure $\mu$ is symmetric with respect to the unit circle,
then the gradient and the
tangential 
gradient of $W_\mu$ are the same.
In other words,
$n$ electrons on the unit circle, allowed to move freely on the plane in the external field generated by $\mu$ will stay on the unit circle.

\begin{proof}[Proofs of Propositions \ref{szimmenergiavalos} and \ref{szimmenergiakorvonal}]
To see Proposition \ref{szimmenergiavalos},
we fix $u_1,\ldots,u_{j-1}$, $u_j$,
$u_{j+1},\ldots, u_n\in \bR$,
and use here $J(.)$ for the conjugation: $J(u)=\overline{u}$.
Writing
$V(u) :=V_\nu(u_1,\ldots,u_{j-1},u,u_{j+1},\ldots,u_n)$ for general complex $u=u_j+iv_j$, and using that
$\nu$ is symmetric to the real line, in other words,
$\nu(H)=\nu(J(H))$ for Borel sets $H$, we find
\[V(u_1,\ldots,u_{j-1}, u, u_{j+1},\ldots, u_n)=V(u_1,\ldots,u_{j-1}, J(u), u_{j+1},\ldots, u_n).\]
Therefore,
\begin{align*}
& \partial_{i {\bf e}_j}V(u_1,\ldots,u_{j-1}, u_j, u_{j+1},\ldots, u_n)
\\ &= \frac{\partial V(u_1,\ldots,u_{j-1}, u_j+iv_j, u_{j+1},\ldots, u_n)}{\partial v_j}|_{(u_1,\ldots,u_{j-1}, u_j, u_{j+1},\ldots, u_n)}
\\ &=\frac{\partial V(u_1,\ldots,u_{j-1}, u_j-iv_j, u_{j+1},\ldots, u_n)}{\partial v_j}|_{(u_1,\ldots,u_{j-1}, u_j, u_{j+1},\ldots, u_n)}
\\ &=\frac{\partial V(u_1,\ldots,u_{j-1}, u_j+iv_j, u_{j+1},\ldots, u_n)}{\partial (-v_j)}|_{(u_1,\ldots,u_{j-1}, u_j, u_{j+1},\ldots, u_n)}
\\ & = - \partial_{i {\bf e}_j} V (u_1,\ldots,u_{j-1}, u_j, u_{j+1},\ldots, u_n)
\end{align*}
showing that Proposition \ref{szimmenergiavalos}
holds.

To see Proposition \ref{szimmenergiakorvonal},
we use that the
inverse Cayley transform
is a conformal mapping,
hence it is locally orthogonal.
\end{proof}

\section{ The case of finitely many pairs of protons}

In this section, we specialize the
propositions
of the previous section.
Most of the results here simply follow from those
statements.

We consider the case when $\supp \mu$ is a finite set with $2n-2$ elements, which are symmetric with respect to
the
unit circle and the support is disjoint from the unit circle and the origin:
\begin{gather*}
\supp\mu=\{\zeta_1,\zeta_2,\ldots,\zeta_{n-1},
\zeta_1^*,\zeta_2^*,\ldots,\zeta_{n-1}^*\},
\\
0<|\zeta_j|<1,
\
\mu(\{\zeta_j\})=\mu(\{\zeta_j^*\})=1/2, \quad j=1,2,\ldots,n-1,
\\
\zeta_j\ne\zeta_k,
\quad j,k=1,2,\ldots,n-1,\  
j \ne k.
\end{gather*}
Recall that $\zeta^*=1/\overline{\zeta}$.

The restriction $\zeta_j\ne 0$ is essential for the following reasons.
Although $0^*=\infty$ may be introduced,
definition of discrete energy $W$ cannot be meaningfully
defined.
Note that the usefulness of symmetrization of external fields lies in that
the normal component  of
the field generated by the
symmetrized proton configuration identically vanishes on the unit circle.
However,
when there is a proton at the origin,
there is no complementing  system of
protons  $\omega_1,\ldots,\omega_m$
(for no $m$) such that
the total system
$\{\zeta_1,\ldots,\zeta_n,\omega_1,\ldots,\omega_m\}$
would generate a field with identically vanishing normal component on the
unit circle.

Furthermore, the protons at
the origin contribute to the electrostatic field of all protons only with identically zero tangential component all over the unit circle.
Therefore,
studying equilibrium and energy minima on the circle,
protons at the origin
have no contribution, hence can be dropped from the configuration.
However, then the total charge of the system will drop below $-1$. There are results in this essentially different case, too,
see e.g.~\cite{Grunbaum} or \cite{ForresterRogers}, Theorem 4.1 but those necessarily involve assumptions on locations of electrons.

The below Proposition \ref{energtrfall} follows directly
from the more general Proposition \ref{contenergtrfall}.
Roughly speaking, it expresses
how the energy functions are mapped
to one another via the
 inverse Cayley transform in this special case.
We use here the exceptional set $E$ introduced in \eqref{diszkretkiveteleshalmaz}.

\begin{proposition}
\label{energtrfall}
Fix $\theta\in\bR$ and let $\zeta_j\in \bD$, $j=1,\ldots,n-1$.
Consider the parameters
$\zeta_j, \zeta_j^*$
as well as the parameters $\xi_j=C_\theta(\zeta_j)$,
$\overline{\xi_j}=C_\theta(\zeta_j^*)$.

Assume that $w_1,\ldots,w_n\in\bC$
are such that
$(w_1,\ldots,w_n,\zeta_1,\ldots,\zeta_{n-1})\not\in E$,
and
$w_j\ne e^{i\theta}$ ($j=1,\ldots,n$).

With
$t_1,\ldots,t_n\in\bC$ where $t_j=C_\theta(w_j)$,
we can write
\begin{equation}
\label{magickanc}
 W\left(w_1,\ldots,w_n\right)
 =
 V\left(t_1,\ldots,t_n\right)
 +c
\end{equation}
where $c$ is a constant,
\begin{equation}
c=n(n-1)\log(2)-n\sum_{k=1}^{n-1}\log|(\xi_k+i)(\overline{\xi}_k+i)|.
\label{c_konst_diszkr}
\end{equation}

If $(w_1,\ldots,w_n,\zeta_1,\ldots,\zeta_{n-1})\in E$,
then $W$, $V$ or $c$ is infinite.
\end{proposition}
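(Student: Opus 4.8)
The plan is to obtain Proposition \ref{energtrfall} as the discrete specialization of the general Proposition \ref{contenergtrfall}. First I would introduce the signed measure
\[
\mu := \frac{1}{2}\sum_{k=1}^{n-1}\left(\delta_{\zeta_k}+\delta_{\zeta_k^*}\right)
\]
and check that it meets the hypotheses of Proposition \ref{contenergtrfall}. Its support is the finite set $\{\zeta_k,\zeta_k^*\}$, hence compact; the standing assumption $(w_1,\ldots,w_n,\zeta_1,\ldots,\zeta_{n-1})\notin E$ forces $\zeta_j\ne 0$, so $\mu(\{0\})=0$; and summing the weights gives $\mu(\bC)=n-1$. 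Because each $\zeta_k\in\bD$ and each $\zeta_k^*\in\bD^*$ lie off the unit circle, $e^{i\theta}\notin\supp\mu$, so by the Remark following Proposition \ref{contenergtrfall} the finiteness condition \eqref{cnuvegesfelt-zeta} holds automatically.

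Next I would identify the transformed objects. Since $C_\theta(\zeta_k)=\xi_k$ and $C_\theta(\zeta_k^*)=\overline{\xi_k}$, the pushforward $\nu=\mu\circ C_\theta^{-1}$ is the discrete measure $\frac12\sum_{k}(\delta_{\xi_k}+\delta_{\overline{\xi_k}})$. Substituting these discrete measures into \eqref{Wmu} and into the definition of $V_\nu$, the integrals collapse into the finite sums of \eqref{W} and \eqref{V}; the factor $2$ acting against the weight $1/2$ reproduces exactly the coefficients in those displays, so $W_\mu=W$ and $V_\nu=V$ on the configurations at hand. The exclusion $(w,\zeta)\notin E$ guarantees $w_j\ne w_k$ and $\zeta_k,\zeta_k^*\ne w_j$, which is precisely $(w_1,\ldots,w_n)\notin E_\mu$; together with $w_j\ne e^{i\theta}$ this makes every $t_j=C_\theta(w_j)$ a finite point. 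Proposition \ref{contenergtrfall} then yields \eqref{magickanc}, and evaluating its constant \eqref{c_konst} against the discrete $\nu$,
\[
\int_\bC \log|\xi+i|\,d\nu(\xi)=\frac12\sum_{k=1}^{n-1}\log\bigl|(\xi_k+i)(\overline{\xi_k}+i)\bigr|,
\]
converts \eqref{c_konst} into exactly \eqref{c_konst_diszkr}.

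For the final assertion I would run through the defining cases of $E$ one at a time. If $w_j=w_k$ for some $j\ne k$, the term $-2\log|w_j-w_k|$ diverges, so $W$ is infinite (and, since then $t_j=t_k$, so is $V$); if $\zeta_k=w_j$ or $\zeta_k^*=w_j$, a factor in the first sum of \eqref{W} vanishes and $W=-\infty$. The genuinely different case is $\zeta_j=0$: here $C_\theta(0)=i$ gives $\xi_j=i$ and $\overline{\xi_j}=-i$, so the factor $(\overline{\xi_j}+i)$ in \eqref{c_konst_diszkr} vanishes and $c=+\infty$. Thus in every case at least one of $W$, $V$, $c$ carries a logarithmic singularity and is infinite.

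I expect the only mild subtleties to be bookkeeping rather than substance: verifying that $W_\mu=W$ and $V_\nu=V$ \emph{exactly}, reconciling the factor-of-two conventions between \eqref{W}/\eqref{V} and \eqref{Wmu}, and noticing that the apparently exceptional configuration $\zeta_j=0$ is detected not through $W$ or $V$ but through the divergence of the constant $c$. Since the heavy analytic work — the cancellation underlying \eqref{magickanc} — is already carried by Proposition \ref{contenergtrfall}, there is no real obstacle beyond this specialization.
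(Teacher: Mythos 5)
Your proposal is correct and takes essentially the same route as the paper, which states that Proposition \ref{energtrfall} ``follows directly from the more general Proposition \ref{contenergtrfall}'' via exactly the discrete measure $\mu=\tfrac12\sum_k(\delta_{\zeta_k}+\delta_{\zeta_k^*})$ you introduce; your verification of the hypotheses, the reconciliation of the factor-of-two conventions, the evaluation of the constant, and the case analysis for $(w,\zeta)\in E$ (including detecting $\zeta_j=0$ through the divergence of $c$) all check out.
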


Next we formulate the following special case of  Proposition \ref{energtrfalllim}.

\begin{proposition}
\label{energtrfalllimdiscr}

Let $\ell\in\{1,\ldots,n\}$ and let $w_j$,
$j\ne \ell$ be fixed such that $w_j\ne e^{i\theta}$ for all $j \ne \ell$.
If $w_\ell = e^{i\theta}$, then $t_\ell=C_\theta(w_\ell)=\infty$
and we get that
\begin{equation}
 W(w_1,\ldots,w_{\ell-1},e^{i\theta},w_{\ell+1},\ldots,w_n)
 =
 V(t_1,\ldots,t_{\ell-1},\infty,t_{\ell+1},\ldots,t_n) +c
 \label{magickcanclimcont}
\end{equation}
where $c$ is defined in \eqref{c_konst_diszkr}
and similarly to \eqref{Vhianyos}
\begin{multline}
 V(t_1,\ldots,t_{\ell-1},\infty,t_{\ell+1},\ldots,t_n):=
  V(t_1,\ldots,t_{\ell-1},t_{\ell+1},\ldots,t_n)
\\
=\sum_{k=1}^{n-1} \sum_{\substack{j=1\\j\ne\ell}}^n\log|(t_j-\xi_k)(t_j-\overline{\xi}_k)|
-2\sum_{\substack{1\leq j<k\leq n\\j\ne \ell,k\ne \ell}}\log|t_j-t_k|.
\end{multline}
\end{proposition}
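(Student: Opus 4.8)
The plan is to obtain this proposition as the announced special case of Proposition \ref{energtrfalllim}, applied to the discrete measure $\mu$ fixed at the beginning of this section. First I would record that this $\mu$ meets the standing hypotheses of Proposition \ref{contenergtrfall} (hence of Proposition \ref{energtrfalllim}): its support is the finite set $\{\zeta_1,\ldots,\zeta_{n-1},\zeta_1^*,\ldots,\zeta_{n-1}^*\}$, so $\mu$ is compactly supported; since $0<|\zeta_j|<1$ we have $0\notin\supp\mu$, whence $\mu(\{0\})=0$; and placing mass $1/2$ on each of the $2(n-1)$ atoms gives total mass $\mu(\bC)=n-1$.

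Next I would verify that, for this $\mu$, the general energy $W_\mu$ of \eqref{Wmu} collapses to the discrete energy $W$ of \eqref{W}, and that the pushforward $\nu:=\mu\circ C_\theta^{-1}$ produces exactly $V_\nu=V$ of \eqref{V}. This is a direct substitution: in $W_{\mu,1}$ the integral against $\mu$ becomes $\tfrac12\sum_{k}\big(\log|w_j-\zeta_k|+\log|w_j-\zeta_k^*|\big)$ at each electron, so the factor $2$ in \eqref{Wmu} cancels the $1/2$ weights and reproduces the first double sum of \eqref{W}, while $W_{\mu,2}=\sum_{l\ne k}\log|w_l-w_k|=2\sum_{l<k}\log|w_l-w_k|$ is the second sum. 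For $\nu$ one uses $C_\theta(z^*)=\overline{C_\theta(z)}$, recorded above, so the atom of $\mu$ at $\zeta_k^*$ is carried to $\overline{\xi_k}$; the atoms of $\nu$ are thus $\xi_k,\overline{\xi_k}$ with weight $1/2$ each, and the same computation turns $V_\nu$ into $V$. Applying this substitution to the constant \eqref{c_konst} gives $2n\int\log|\xi+i|\,d\nu=n\sum_{k=1}^{n-1}\log|(\xi_k+i)(\overline{\xi_k}+i)|$, so $c$ specializes precisely to \eqref{c_konst_diszkr}.

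It then remains to check the two local hypotheses \eqref{hatarpontnagyonszep} and \eqref{hatarpontszep} of Proposition \ref{energtrfalllim} in the present setting. Condition \eqref{hatarpontnagyonszep}, $e^{i\theta}\notin\supp\mu$, is automatic, since the atoms $\zeta_k,\zeta_k^*$ lie off the unit circle ($0<|\zeta_k|<1<|\zeta_k^*|$) whereas $e^{i\theta}$ lies on it. For \eqref{hatarpontszep} I would use the given $w_j\ne e^{i\theta}$ for $j\ne\ell$ together with the admissibility of the configuration (electrons distinct and disjoint from the protons, i.e.\ $(w_1,\ldots,w_n,\zeta_1,\ldots,\zeta_{n-1})\notin E$), so that after replacing $w_\ell$ by $e^{i\theta}$ the resulting point still avoids $E_\mu$: no two coordinates coincide and, since $e^{i\theta}\notin\supp\mu$, each logarithmic integral $\int|\log|w_j-\zeta||\,d|\mu|$ stays finite. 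With all hypotheses in force, Proposition \ref{energtrfalllim} yields \eqref{magickcanclimcont} together with the extended definition of $V$ displayed in the statement.

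I do not expect a genuine obstacle, since the analytic content is already contained in Proposition \ref{energtrfalllim}; the only points demanding care are the bookkeeping of the $1/2$-weights and the factor $2$, so that the general formulas reduce to the discrete ones and the constant matches \eqref{c_konst_diszkr}, and the verification that replacing $w_\ell$ by the excluded boundary point $e^{i\theta}$ keeps the configuration outside the exceptional set $E_\mu$.
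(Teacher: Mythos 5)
Your proposal is correct and matches the paper's intent exactly: the paper states this proposition as a direct specialization of Proposition \ref{energtrfalllim} to the discrete measure $\mu$ with atoms of weight $1/2$ at $\zeta_k,\zeta_k^*$, and offers no further proof. Your verifications (total mass $n-1$, reduction of $W_\mu$ to $W$ and of $c$ to \eqref{c_konst_diszkr}, and the checks of \eqref{hatarpontnagyonszep} and \eqref{hatarpontszep}) supply precisely the bookkeeping the paper leaves implicit.
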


In Figure \ref{fig:confgandcayley}, particular sets of electrons and protons are shown along with the transformed configuration on the real axis.
Namely, the zeros of the monic Blaschke product $B(.)$
are
$1/2$, $(1+i)/2$, $2/3 i$, $-3/4 i$ and $-7/10+ 6/10 i$.
The protons are at the critical points of this monic Blaschke product $B'(.)=0$ : $0.38-2.21 i$, $1.69+1.13 i$,
$0.68+1.86 i$, $-0.99+0.94 i$, $-0.53+0.51 i$
, $0.17+0.47 i$, $0.41+0.27 i$, $0.08-0.44 i$
(here and in the remaining part of this paragraph
the numbers are rounded to two decimal digits).
The electrons are at the solutions of $B(.)=1$,
and their arguments are: $-2.87$, $-1.19$, $0.41$, $1.28$, $2.33$.
For the inverse Cayley transform, $\theta=-2.87$, that is,
the first electron is mapped to infinity.

\begin{figure}[h]
\noindent
\begin{center}
\begin{minipage}{0.35\textwidth}
\includegraphics[keepaspectratio,height=50mm]{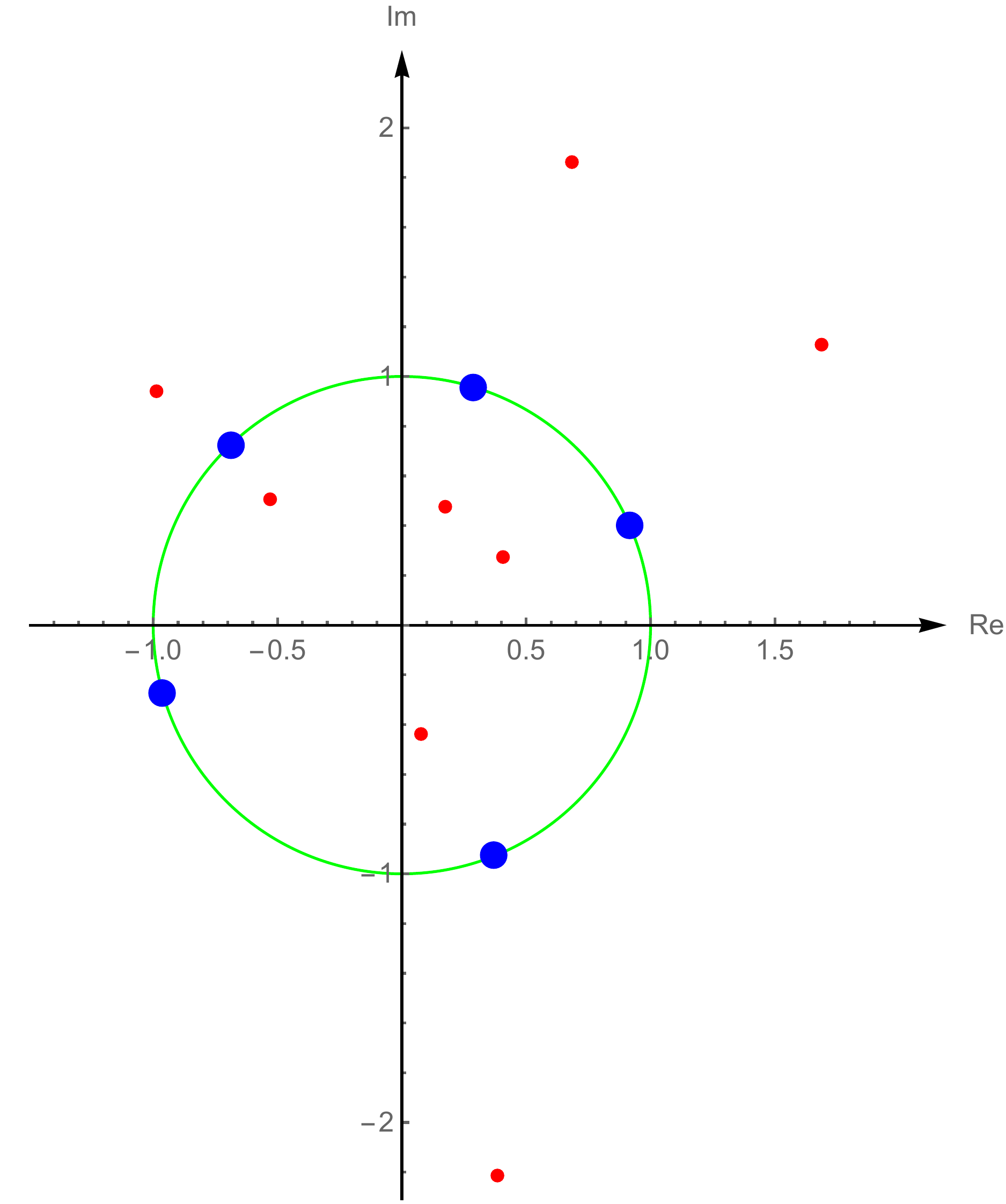}
\end{minipage}
\begin{minipage}{0.35\textwidth}
\includegraphics[keepaspectratio,height=50mm]{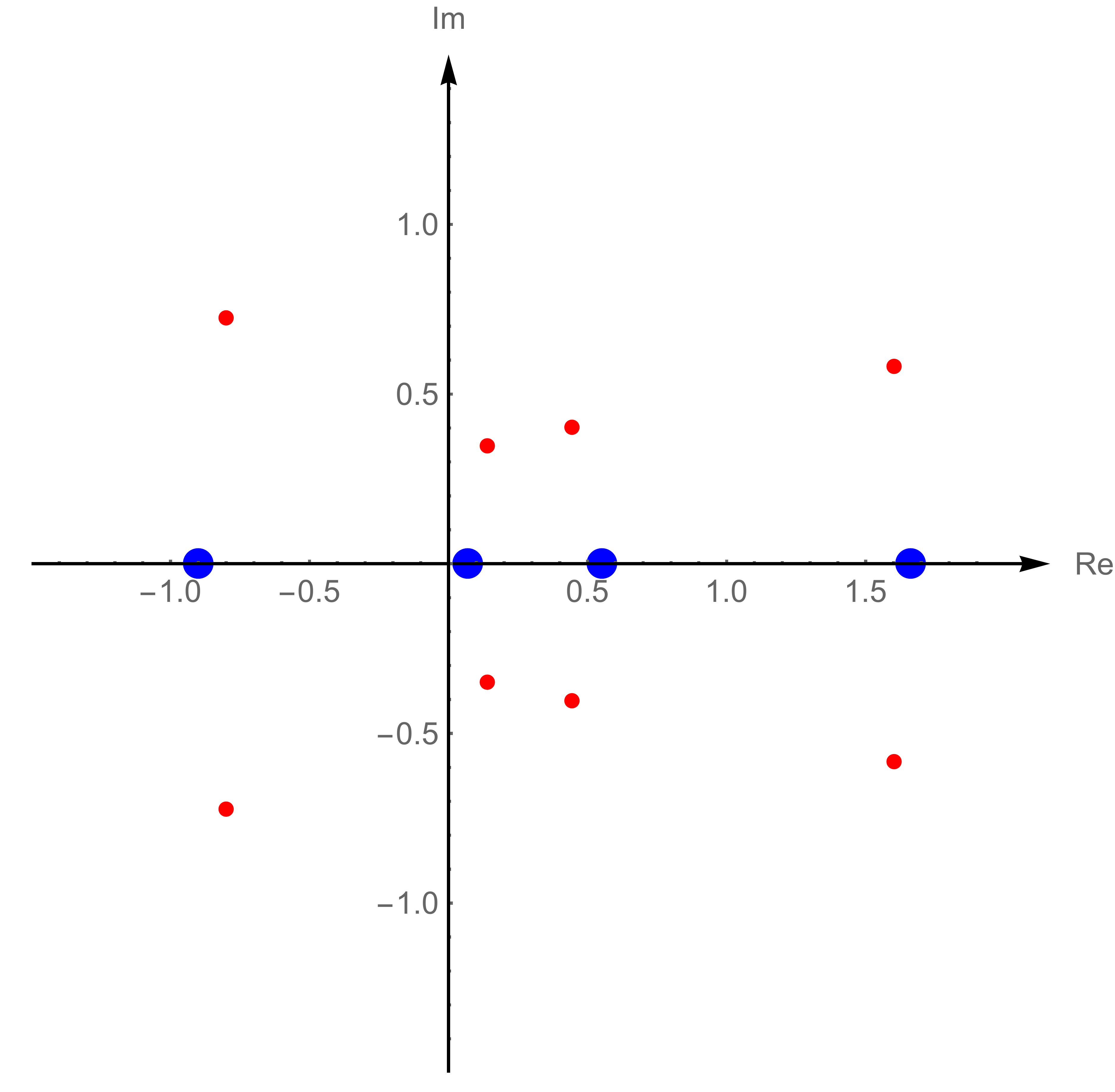}
\end{minipage}
\caption{Equilibrium configurations of five electrons on the unit circle and
the transformed configuration, with one electron transferred to $\infty$.}
\label{fig:confgandcayley}
\end{center}
\end{figure}

In the next proposition we point out, 
how the critical points of the original and the transformed energy function correspond to each other.

\begin{proposition} \label{pr:criticpoints}
Let $\zeta_j\in \bD$, $j=1,\ldots,n-1$
and $w_j\in \bC$, $j=1,\ldots,n$.
Assume that $w_j$'s are restricted to the unit circle, i.e. \eqref{electronsonthecircle}
and \eqref{wjtauj} hold.
We also assume that
$(w_1,\ldots,w_n,\zeta_1,\ldots,\zeta_{n-1})\not\in E$.

Fix $w_1$ and $\tau_1\in\bR$ and
assume that
$(\tau_1,\tau_2,\ldots,\tau_n)\in A$.
Consider the inverse Cayley mapping $C_{\tau_1}(.)$
and also the points
$\xi_j:=C_{\tau_1}(\zeta_j)$,
$\overline{\xi_j}=C_{\tau_1}(\zeta_j^*)$
and $t_j=C_{\tau_1}(e^{i\tau_j})$.

Then $\tau_2<\ldots<\tau_n$ from the interval $(\tau_1,\tau_1+2\pi)$
is a (real) critical point of $\widetilde{W}$
if and only if $t_2<\ldots<t_n$
is a (real) critical point of $V=V(t_2,\ldots,t_n)$.
\end{proposition}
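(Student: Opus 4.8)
The plan is to reduce the statement to a one-variable-at-a-time change of variables combined with the energy transfer identity already established. The key observation is that fixing $w_1=e^{i\tau_1}$ and choosing the inverse Cayley transform with parameter $\theta=\tau_1$ sends exactly this fixed electron to $\infty$, so that the remaining electrons $w_2,\ldots,w_n$ are carried to the real points $t_j=C_{\tau_1}(e^{i\tau_j})$, $j=2,\ldots,n$, and the two energy functions differ only by a constant.

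First I would apply Proposition \ref{energtrfalllimdiscr} with $\ell=1$ and $\theta=\tau_1$. Since $(\tau_1,\ldots,\tau_n)\in A$ we have $\tau_1<\tau_j<\tau_1+2\pi$, hence $w_j=e^{i\tau_j}\ne e^{i\tau_1}$ for $j=2,\ldots,n$, so the hypotheses of that proposition hold; using $(w_1,\ldots,w_n,\zeta_1,\ldots,\zeta_{n-1})\not\in E$ for finiteness, we obtain
\[
\widetilde{W}(\tau_1,\tau_2,\ldots,\tau_n)=W(e^{i\tau_1},e^{i\tau_2},\ldots,e^{i\tau_n})=V(t_2,\ldots,t_n)+c.
\]
The crucial point is that the constant $c$ from \eqref{c_konst_diszkr} depends only on the transformed protons $\xi_k=C_{\tau_1}(\zeta_k)$, hence only on the fixed $\tau_1$ and the fixed $\zeta_k$; it does not involve the free variables $\tau_2,\ldots,\tau_n$.

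Next I would differentiate this identity. Because $t_j=C_{\tau_1}(e^{i\tau_j})$ depends only on $\tau_j$ and not on the other angles, the Jacobian of the substitution $(\tau_2,\ldots,\tau_n)\mapsto(t_2,\ldots,t_n)$ is diagonal, and the chain rule gives
\[
\partial_{\tau_j}\widetilde{W}=\frac{dt_j}{d\tau_j}\,\partial_{t_j}V,\qquad j=2,\ldots,n.
\]
It then remains to note that each factor $dt_j/d\tau_j$ is nonzero: the properties of the inverse Cayley transform recorded earlier state that $t\mapsto C_{\tau_1}(e^{it})$ is a smooth, strictly increasing bijection of $(\tau_1,\tau_1+2\pi)$ onto $\bR$ (equivalently $C'_{\tau_1}\ne 0$ off $e^{i\tau_1}$), so $dt_j/d\tau_j>0$ for $j=2,\ldots,n$. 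Consequently the gradient $(\partial_{\tau_2}\widetilde{W},\ldots,\partial_{\tau_n}\widetilde{W})$ vanishes if and only if $(\partial_{t_2}V,\ldots,\partial_{t_n}V)$ vanishes, which is exactly the claimed equivalence.

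The argument is short because all the analytic content sits in the transfer formula of Proposition \ref{energtrfalllimdiscr}; the only points demanding care are bookkeeping rather than genuine obstacles: verifying that the fixed electron is correctly handled at infinity (so that the relevant energy is the reduced $V(t_2,\ldots,t_n)$ rather than the full $V(t_1,\ldots,t_n)$), that $c$ is constant in the free variables, and that the diagonal Jacobian factors never vanish. The monotonicity and nonvanishing of the derivative of $C_{\tau_1}(e^{it})$ are already available from the preliminary discussion of the Cayley transform, so no new estimate is needed.
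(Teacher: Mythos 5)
Your proposal is correct and follows essentially the same route as the paper: apply the energy transfer identity with $\ell=1$ and $\theta=\tau_1$ (sending the fixed electron to $\infty$), then use the chain rule through the diagonal Jacobian of $(\tau_2,\ldots,\tau_n)\mapsto(t_2,\ldots,t_n)$, whose entries the paper computes explicitly as $\sin^{-2}\bigl(\frac{\tau_j-\theta}{2}\bigr)>0$. No gaps.
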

\begin{proof}
Basically,
we use the chain rule to show that the critical points correspond to each other
under the diffeomorphism given by
the  inverse Cayley transform.

Let $\psi(\tau):=e^{i\tau}$.
It is  standard to see
\begin{equation*}
C_\theta(\psi(\tau))=
i\frac{1+e^{i(\tau-\theta)}}{1-e^{i(\tau-\theta)}}
= - \cot \frac{\tau-\theta}{2}, \quad
\frac{d}{d\tau}
C_\theta(\psi(\tau))
= \frac{1}{\sin^2 \frac{\tau-\theta}{2}}
\end{equation*}
where we used real differentiation with respect to
$\tau$.
We write $\Psi(\tau_2,\ldots,\tau_n):=\left(\psi(\tau_2),\ldots,\psi(\tau_n)\right)$
and
$\cpw(z_2,\ldots,z_n):=\left(C_\theta(z_2),\ldots,C_\theta(z_n)\right)^T$, where $\cdot^T$ denotes transpose.
Hence $\cpw\circ \Psi$ maps from $\bR^{n-1}$ to $\bR^{n-1}$
and $\widetilde{W}=W\circ \Psi=V\circ\cpw\circ \Psi+c$,
by Proposition \ref{energtrfalllim}.
The derivative of
$\cpw\circ \Psi$
as a real mapping is the diagonal matrix $D:=\mathrm{ diag}\left(\sin^{-2}
\left(\frac{\tau_2-\theta}{2}\right),\ldots,\sin^{-2}\left(\frac{\tau_n-\theta}{2}\right)\right)$.
This is an invertible matrix, because $\theta=\tau_1<\tau_2<\ldots<\tau_n<\tau_1+2\pi$.
Because of chain rule,
\begin{equation*}
\nabla_{\tau_2,\ldots,\tau_n} \widetilde{W}
=
\left.
\nabla_{t_2,\ldots,t_n} V
\right|_{\cpw\circ \Psi} \cdot D,
\end{equation*}
or by coordinates

\begin{equation*}
\left.
  \frac{\partial\widetilde{W}(\tau_2,\ldots,\tau_n)}{\partial \tau_j}=\frac{\partial V(t_2,\ldots,t_n)}{\partial t_j} \right|_{\cpw\circ \Psi} \cdot \frac{1}{\sin^2\left(\frac{\tau_j-\theta}{2}\right)}, \qquad j=2,\ldots,n,
\end{equation*}
which immediately implies the assertion.
\end{proof}

\section{Proofs of the two main theorems}

\begin{proof}[Proof of Theorem \ref{thm:mainthm1}]

We have
that $\tau_j$'s are different, and
$a_1,\ldots,a_n\in \bD$ is a sequence with $\zeta_j\ne 0$.
These imply that $(\exp(i \tau_1(\delta)),\ldots,\exp(i \tau_n(\delta)),\zeta_1,\ldots,\zeta_{n-1})$ is not in $E$ (see \eqref{diszkretkiveteleshalmaz}).
We also use the parametrization of 
 the solution curve $S$ defined in
\eqref{biggersolcurve},
and the strict monotonicity and continuity of $\delta\mapsto \tau_1(\delta)$.
Hence for any $w_1$,
$w_1=e^{i \beta}$
where $\beta\in[0,2\pi)$,
the respective points
on 
the solution curve $S$
are uniquely determined:
$w_j=w_j(w_1)$, more precisely,
$w_j=\exp(i \tau_j(\tau_1^{-1}(\beta)))$,
$j=2,\ldots,n$.

Fix $w_1$, or, equivalently, $\beta\in [0,2\pi)$. 
Now we want to show that
\[
\left(\tau_2,\tau_3,\ldots,\tau_n\right) \mapsto
\widetilde{W}(\beta,\tau_2,\tau_3,\ldots, \tau_n)
\]
(assuming $\beta< \tau_2< \ldots<\tau_n< \beta+2\pi$)
has only one critical point,
namely the point with $\tau_j=\tau_j(\tau_1^{-1}(\beta))$ for $j=2,3,\ldots,n$, which happens to be the unique minimum point in $\left(\tau_2,\tau_3,\ldots,\tau_n\right)$.

To this end, we are going to transform 
the question to the upper half-plane, 
as we want to use Lemma 6 from  \cite{SemmlerWegert}. 
We apply first the inverse Cayley transform $C(.)=C_{\beta}(.)$
which maps $w_1$ to $\infty$.
Hence we have $n-1$ pairs of fixed protons, $\xi_j=C(\zeta_j)$, $\overline{\xi_j}=C(\zeta_j^*)$,
$j=1,\ldots,n-1$
and $n-1$ free electrons on the real axis, $t_j=C(e^{i\tau_j})$, $j=2,\ldots,n$.
We know that $\beta< \tau_2< \ldots<\tau_n< \beta+2\pi$,
and $t_2< t_3<\ldots < t_n$ are equivalent.
(If any two of the $\tau$'s were equal, then the corresponding $t$'s would be equal too
and $\widetilde{W}(\tau_2,\ldots,\tau_n)=V(t_2,\ldots,t_n)=+\infty$, but
we assumed that $(w_1,\ldots,w_n,\zeta_1,\ldots,\zeta_{n-1}) \not\in E$
so that all $w_j$'s have to be different.)
Again, since we are outside $E$,
we know that $\xi_j\ne -i$
and $\overline{\xi_j}\ne -i$,
which, in turn, implies that $c$ is finite in \eqref{c_konst_diszkr}.
Thus, we can apply Proposition \ref{energtrfalllimdiscr}
(for $\ell=1$)
to relate the energy $\widetilde{W}$ on the unit circle and the energy $V$ on the real axis:
\begin{equation*}
\widetilde{W}(\beta,\tau_2,\ldots,\tau_n)
=
W(e^{i \beta},e^{i\tau_2},\ldots,e^{i\tau_n})
=
V(t_2,\ldots,t_n)+c.
\end{equation*}

Introducing
$U:=\{(t_2,\ldots,t_{n})\in \mathbb{R}^{n-1}:  t_2<t_3< \ldots <t_n\}$,
Lemma 6 from \cite{SemmlerWegert} gives
that there is exactly one critical point
$(\widetilde{t}_2,\ldots,\widetilde{t}_n)$
of $V$ in $U$ (gradient of $V$ vanishes), which is the global minimum point in $U$.
In view of Proposition \ref{pr:criticpoints}, the corresponding 
$(\widetilde{\tau_2},\ldots,\widetilde{\tau_n})$
with $\beta<\widetilde{\tau_2}<\ldots<\widetilde{\tau_n}<\beta+2\pi$
and $\exp( i\widetilde{\tau_2})=C_\beta^{-1}(\widetilde{t_2}),\ldots, \exp( i\widetilde{\tau_n})=C_\beta^{-1}(\widetilde{t_n})$,
is the only critical point of $\widetilde{W}=\widetilde{W}(\beta,\tau_2,\ldots,\tau_n)$, restricted to the simplex  $\Delta_\beta$
of points of the form $(\beta,\tau_2,\ldots,\tau_n)$
under the condition $\beta<\tau_2<\tau_3<\ldots<\tau_n <\beta+2\pi$. 
Note that $\Delta_\beta =Z_\beta \cap A$ 
with $Z_\beta$ denoting the hyperplane $\{\beta\} \times {\mathbb R}^{n-1}$.
Furthermore, applying 
Proposition \ref{energtrfalllimdiscr}, 
we get that this is the unique global minimum point 
of $\widetilde{W}$ on $\Delta_\beta$. 

Let us define
$\varphi:[0,2\pi)\rightarrow \mathbb{R}^n$ 
by putting $\varphi(\beta):=(\beta,\widetilde{\tau_2},\widetilde{\tau_3},\ldots,\widetilde{\tau_n})$.

As 
$S$
is a continuous curve lying in $A$, 
there exists a point 
$\mathbf{t}$ of 
$S \cap Z_\beta$,
which necessarily belongs to 
$S \cap Z_\beta \cap A = S \cap \Delta_\beta$,
too. 
However -- as it was shown  
in Theorem 4 in \cite{PapSchipp2015} -- 
$\nabla \widetilde{W} \equiv \mathbf{0}$ 
on $S$, 
therefore $\mathbf{t}$ is also a critical 
point of $\widetilde{W}|_{\Delta_\beta}$. 
Whence $\mathbf{t} = \varphi(\beta)$, 
the unique critical point of $\widetilde{W}|_{\Delta_\beta}$, 
which is, as said above, 
the global minimum point of $\widetilde{W}|_{\Delta_\beta}$, too.

It is easy to see that
$ \Phi:=W\circ\varphi$ 
is continuous on 
$[0,2\pi)$
and with $ \Phi(2\pi):=W(\varphi(0))$ is continuously extensible onto $[0,2\pi]$. 
Thus $\Phi=W\circ \varphi$ 
has a global minimum on
$[0,2\pi)$,
let it be $\beta^*$.
Obviously, $\varphi({\beta^*})$ is also on the solution curve $S$,
and $\widetilde{W}(\tau_1,\ldots,\tau_n)$ has
a global minimum in $\varphi({\beta^*})$.
Since $S$ is a smooth
arc, and $\nabla \widetilde{W} \equiv {\bf 0}$ on $S$, 
we get that $\left. \widetilde{W}\right|_{S}\equiv const$.
That is, we find $\left. \widetilde{W}\right|_{S}\equiv \varphi({\beta^*})$, the global minimum of
the discrete energy function $\widetilde{W}=\widetilde{W}(\tau_1,\ldots,\tau_n)$.

Finally, we show that all points of $S_\bR$ are global minimum points of $\widetilde{W}(.)$.
Using that $\widetilde{W}(.)$ is $(2\pi,\ldots,2\pi)$-periodic,
that is $\widetilde{W}(\tau_1,\tau_2,\ldots,\tau_n)=\widetilde{W}(\tau_1+2\pi,\tau_2+2\pi,\ldots,\tau_n+2\pi)$
and that for  each $j$, $\tau_j(\delta+2n\pi)=\tau_j(\delta)+2\pi$,
we obtain that $\widetilde{W}(\tau_1(\delta),\ldots,\tau_n(\delta))$ is actually $2n\pi$ periodic in $\delta$.
This, expressed with $S$ and $S_\bR$,
implies that all points of $S_\bR$ are global minimum points of $\widetilde{W}(.)$.
\end{proof}

Note that the above provides a positive answer to the
question raised in \cite{PapSchipp2015}, p.~476:
the discrete energy function $\widetilde{W}=\widetilde{W}(\tau_1,\ldots,\tau_n)$
attains global minimum
at every point of the full solution curve $S_\bR$.
Moreover, these are the only critical points of $\widetilde{W}$.

\bigskip

We collect the following set of "bad" configurations:
\begin{multline}
X:=\{(z_1,z_2,\ldots,z_n)\in (\partial \mathbb{D})^n:
z_j=z_k \text{ for some }j\ne k,
 \text{ or }
B'(0)=0
\}.
\end{multline}

\begin{proof}[Proof of Theorem \ref{thm:reverseproblem}]
Let $(z_1, \ldots, z_n)\in (\partial \mathbb{D})^n \setminus X$ be given.
Denote their arguments by $t_j:=\Im \log(z_j)$, $j=1,2,\ldots,n$.
Without loss of generality, we may
assume that $t_1,t_2,\ldots,t_n\in[0,2\pi)$ and $t_1<t_2<\ldots<t_n$.

We use the above cited result of Hjelle
providing 
a Blaschke product $B(z)=B(z_1,\ldots,z_n;z)$ with degree $n$,
satisfying \eqref{eq:blaschkeinterpol}.
Denote the leading coefficient
of $B(.)$ by $\chi$
where $\chi=e^{i\delta_0}$; note that $\delta_0$ is determined only $\bmod~2\pi$ by this choice.
Let us define $B_1(z):=\chi^{-1} B(z)$ 
which is the monic Blaschke
product with the same zeros.
We use $\alpha$, $T$, $S_0$, $S$ and $S_\bR$
defined for $B_1(.)$.
Now we fix the value of $\delta_0$ so 
that $-\delta_0\in[\alpha,\alpha+2\pi)$; 
observe that this does not change the value of $\chi$
and does not cause circular dependence.
Note that the sets $S_\bR$ defined for
$B$ and $B_1$ are the same,
because multiplying the Blaschke product with a constant 
is just a translation of variable.
More precisely
$\tau_j(B;\delta)=\tau_j(B_1;\delta-\delta_0)$ for all $j=1,2,\ldots,n$, $\delta\in\bR$.

Hjelle's result means that
$\tau_j(B;0)=t_j$, 
hence $\tau_j(B_1;-\delta_0)=t_j$.
By the choice of $\delta_0$,
we immediately see that
$(t_1,t_2,\ldots,t_n)=T(-\delta_0)$,
that is,
$(t_1,t_2,\ldots,t_n)$
is on $S_0$ defined 
in \eqref{solcurve} for the monic Blaschke product $B_1(.)$.

We use the description from Theorem \ref{thm:mainthm1}.
This way we obtain that
$\widetilde{W}(.)$ has global minimum
at the points $T(\delta)$,
$\delta\in [\alpha,\alpha+2\pi)$
(defined by 
$B_1(.)$).
Observe that when the parameter $\delta$ 
changes continuously further on in 
$[\alpha,\alpha+2n\pi)$, 
the curve $T(\delta)$ recovers ($\bmod ~2\pi$) 
the same set of arguments
$(t_1,\ldots,t_n)$ $n$ times,
in each cyclic permutations of them, 
while the corresponding $z_1,\ldots,z_n$ is 
repeated $n$ times 
(in each cyclic order of the values) 
always determining the same Blaschke product.

We remark, that according to Proposition \ref{szimmenergiakorvonal},
the energy function $W(.)$ has critical point in $(z_1,z_2,\ldots,z_n)$ not just with restriction to the unit circle,
but  also in the total electrostatic equilibrium sense.
This was also observed in \cite{PapSchipp2001}, see Theorem 2.
\end{proof}

Roughly speaking,
the union of solution curves
for different $a_1,a_2,\ldots,a_n$ covers
the whole 
$A\cap Q$,
and considering as electrons on the unit circle,
the whole
space  $(z_1,z_2,\ldots,z_n)\in (\partial \mathbb{D})^n\setminus X$.

This last result, when compared with Theorem \ref{thm:mainthm1},
shows a direct relation between the location of electrons, $z_1,z_2, \ldots, z_n$ and
the location of pairs of protons,
$\zeta_1,\zeta_1^*,\zeta_2,\zeta_2^*,\ldots,\zeta_{n-1},\zeta_{n-1}^*$.

\begin{corollary}
If $(z_1, \ldots, z_n)\in (\partial \mathbb{D})^n\setminus X$
is given, then the points $\zeta_1, \ldots, \zeta_{n-1}\in \mathbb{D}\setminus \{0\}$ in Theorem \ref{thm:reverseproblem}
are the critical points of the Blaschke product satisfying \eqref{eq:blaschkeinterpol}.
\end{corollary}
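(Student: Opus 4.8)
The plan is to read off the claim directly from the construction carried out in the proof of Theorem \ref{thm:reverseproblem}, so that the argument is essentially one of bookkeeping rather than new analysis. First I would recall that in that proof we did not work with the interpolating Blaschke product $B(z)=B(z_1,\ldots,z_n;z)$ of \eqref{eq:blaschkeinterpol} directly, but passed to the monic Blaschke product $B_1(z):=\chi^{-1}B(z)$ having the same zeros, and then applied Theorem \ref{thm:mainthm1} to $B_1$. By the very statement of Theorem \ref{thm:mainthm1}, the pairs of protons are obtained by listing the critical points of $B_1$, with $\zeta_1,\ldots,\zeta_{n-1}\in\bD\setminus\{0\}$ being the ones in the open disk and $\zeta_1^*,\ldots,\zeta_{n-1}^*\in\bD^*$ their inversions. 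Thus, by construction, the $\zeta_j$ appearing in Theorem \ref{thm:reverseproblem} are precisely the interior critical points of $B_1$.

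The second, and only genuinely substantive, step is to identify the critical points of $B_1$ with those of $B$. Since $\chi$ is a unimodular constant, we have $B_1'(z)=\chi^{-1}B'(z)$, so that $B_1'(z)=0$ if and only if $B'(z)=0$; the two Blaschke products therefore have exactly the same set of critical points, with the same multiplicities. In particular their interior critical points coincide, so $\zeta_1,\ldots,\zeta_{n-1}$ are the critical points of $B$ lying in $\bD\setminus\{0\}$, which is the assertion.

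Finally I would note where the standing hypothesis $B'(0)\ne 0$ enters: it is exactly what makes Walsh's Blaschke theorem (quoted in the Introduction) applicable, guaranteeing that among the $2n-2$ zeros of $B'$ precisely $n-1$ lie in $\bD$, none of them at the origin, so that the list $\zeta_1,\ldots,\zeta_{n-1}\in\bD\setminus\{0\}$ is well defined and of the correct length. I do not expect any real obstacle here beyond this bookkeeping; the content of the corollary is simply the observation that multiplying a Blaschke product by a unimodular constant --- the only freedom left after the zeros and the interpolation data of \eqref{eq:blaschkeinterpol} have been fixed --- does not move its critical points, so the protons furnished by Theorem \ref{thm:reverseproblem} admit the intrinsic description in terms of $B$ itself claimed in the corollary.
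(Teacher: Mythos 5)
Your proposal is correct and matches the paper's (implicit) argument: the corollary is stated without a separate proof precisely because, in the proof of Theorem \ref{thm:reverseproblem}, the protons are taken to be the interior critical points of the monic product $B_1=\chi^{-1}B$ via Theorem \ref{thm:mainthm1}, and $B_1'=\chi^{-1}B'$ shows these coincide with the critical points of the interpolating Blaschke product $B$ itself. Your remark on the role of $B'(0)\ne 0$ and Walsh's theorem in guaranteeing the list $\zeta_1,\ldots,\zeta_{n-1}\in\bD\setminus\{0\}$ is well placed and consistent with the paper's standing assumptions.
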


\section*{Acknowledgments}

The authors gratefully acknowledge their indebtedness to Margit Pap and Ferenc Schipp for
calling their attention to the problem and for useful suggestions and discussions.

This research was partially supported by the DAAD-TKA Research Project "Harmonic Analysis and Extremal Problems" \# 308015.

Marcell Gaál was supported by the National, Research and Innovation Office NKFIH Reg. No.'s K-115383 and K-128972,
and also by the Ministry for Innovation and Technology, Hungary throughout Grant TUDFO/47138-1/2019-ITM.

Béla Nagy was supported  by the ÚNKP-18-4 New National Excellence Program of
the Ministry of Human Capacities.

Zsuzsanna Nagy-Csiha was supported by the ÚNKP-19-3 New National Excellence Program of the Ministry for Innovation and Technology.

Szilárd Gy.~Révész was supported in part by Hungarian National Research, Development and Innovation Fund projects \# K-119528 and K-132097.

The authors are grateful to the anonymous referees for their thorough work, precise corrections and useful suggestions.

The authors are thankful to Gunter Semmler for his interest and constructive remarks.

\bibliographystyle{amsplain}
\bibliography{mainarxivv2}

Marcell Gaál\\

\smallskip

Béla Nagy\\
 MTA-SZTE Analysis and
 Stochastics Research Group,\\
  Bolyai Institute, University of Szeged\\
  Aradi vértanuk tere 1\\
   6720 Szeged, Hungary\\
\href{mailto:nbela@math.u-szeged.hu}{nbela@math.u-szeged.hu}

\smallskip

Zsuzsanna Nagy-Csiha\\
Institute of Mathematics and Informatics,
 Faculty of Sciences,\\
University of Pécs\\
Ifjúság útja 6\\
7624 Pécs,  Hungary\\
\qquad and\\
Department of Numerical Analysis, Faculty of Informatics, \\
Eötvös Loránd University \\
Pázmány Péter sétány 1/c \\
1117 Budapest, Hungary\\
\href{mailto:ncszsu@gamma.ttk.pte.hu}{ncszsu@gamma.ttk.pte.hu}

\smallskip

Szilárd Gy.{} Révész\\
 Alfréd Rényi Institute of Mathematics\\
 Reáltanoda utca 13-15\\
 1053 Budapest, Hungary \\
 \href{mailto:revesz.szilard@renyi.hu}{revesz.szilard@renyi.hu}

\end{document}